\theoremstyle{plain}
    \newtheorem{thm}{Theorem}
    \newtheorem{cor}[thm]{Corollary}
    \newtheorem{prop}{Proposition}
    \newtheorem{lemma}[prop]{Lemma}
\theoremstyle{definition}
\theoremstyle{remark}
    \newtheorem{rem}[prop]{Remark}
\def\DC{{\mathrm{DC}}}
\def\SU{{\mathrm{SU}}}
\def\be{\begin{equation}}
\def\ee{\end{equation}}
\def\bm{\begin{matrix}}
\def\em{\end{matrix}}
\newcommand{\C}{\mathbb{C}}\newcommand{\R}{\mathbb{R}}\newcommand{\Q}{\mathbb{Q}}\newcommand{\Z}{\mathbb{Z}}
\newcommand{\D}{\mathbb{D}}
\renewcommand{\P}{\mathbb{P}}
\newcommand{\cE}{\mathcal{E}}
\renewcommand{\setminus}{\smallsetminus}
\newcommand{\SL}{\mathrm{SL}}
\newcommand{\id}{\mathit{id}}
\newcommand{\comm}[1]{}
\begin{document}

\title[KAM, Lyapunov exponents, and the Spectral Dichotomy]
{KAM, Lyapunov exponents, and the Spectral Dichotomy
for typical one-frequency Schr\"odinger operators}

\author[A.~Avila]{Artur Avila}
\address{
Institut f\"ur Mathematik, Universit\"at Z\"urich, Winterthurerstrasse 190,
CH-8057 Z\"urich, Switzerland and IMPA, Estrada Dona Castorina, 110,
Rio de Janeiro, 22460-320, Brazil}
\thanks{This work was partially conducted during the period the author
served as a Clay Research Fellow.  It was supported by an ERC Starting Grant
and a grant from the SNSF}
\email{artur.avila@math.uzh.ch}

\begin{abstract}
We show that a one-frequency analytic $\SL(2,\R)$
cocycle with Diophantine rotation vector is analytically linearizable if and
only if the Lyapunov exponent is zero through a complex neighborhood of the
circle.  More generally, we show (without any arithmetic assumptions) that
regularity implies almost reducibility, i.e., the range of
validity of the perturbative analysis near constants
is specified by a condition on the
Lyapunov exponents.  Together with our
previous work, this establishes a Spectral Dichotomy for
typical one-frequency Schr\"odinger operators: they can be written as a
direct sum of large-like and small-like operators.  In particular, the
typical operator has no singular continuous spectrum.
\end{abstract}

\date{\today}

\maketitle




\section{Introduction}

This paper has two main purposes.  From the dynamical side we will show that
for analytic $\SL(2,\R)$ cocycles over a translation of $\R/\Z$,
the ``domain of applicability of KAM techniques'' is given precisely by a
condition on the Lyapunov exponent of complexifications, which closes a
chapter opened in \cite {H}.  From the spectral side, it provides the last
step in our program to prove the Spectral Dichotomy for typical
one-frequency Schr\"odinger operators.

\subsection{``...et quelques examples montrant
le caract\`ere local d'un th\'eor\`eme d'Arnold et de Moser
sur le tore de dimension deux''}  This is the second half of the title of
Michael Herman's celebrated paper \cite {H}, where he introduced
his famous ``subharmonic method''
for minoration of the Lyapunov exponents of cocycles, which he then uses to
make explicit the ``lack of
globality'' of KAM theory on the two-dimensional torus (in striking contrast
with the one-dimensional situation by the remarkable Herman-Yoccoz
rigidity theory \cite {H1}, \cite {Y}).

The main class of ``counterexamples to global KAM'' in the two-torus
discussed in \cite {H} is given by one-frequency $\SL(2,\R)$ cocycles
homotopic to a constant.
Those have the form $(\alpha,A):
(x,y) \mapsto (x+\alpha,A(x) \cdot y)$, where $\alpha \in \R \setminus \Q$
and $A(x)$ is a continuous $\SL(2,\R)$ function of $x$, understood to act
projectively in the second coordinate.  In the case
\be
A(x)=A^{(E-\lambda v)}(x)=\left (\bm E-\lambda v(x) & -1 \\ 1 & 0 \em \right
),
\ee
where $\lambda>0$ is some coupling constant and $v$ is a
trigonometric polynomial, the
argument of Herman can be summarized as follows.  For
general reasons, $(\alpha,A)$ has always a well defined rotation vector
$(\alpha,\rho)$ with $\rho \in [0,1]$, and all values of $\rho$ can be
obtained as $E$ changes.  In particular, if $\alpha \in \DC$, i.e.,
it satisfies a Diophantine condition, then one can
always choose $E$ such that
the vector $(\alpha,\rho)$ satisfies some
fixed Diophantine condition.  Thus if $\lambda$
is suitably small, one
can apply KAM theory (this idea in fact dates back to Dinaburg-Sinai \cite
{DS}) to show that the torus dynamics is in fact
quasiperiodic.  On the other hand, he showed that if
$\lambda$ is large then
the cocycle defined by $(\alpha,A)$ has a positive Lyapunov exponent
\be \label {L}
L=\lim \frac {1} {n} \int_{\R/\Z} \ln \|A(x+(n-1) \alpha) \cdots A(x)\|
dx>0,
\ee
which implies that the dynamics on the two-torus
has exactly two ergodic invariant measures with one zero Lyapunov exponent
corresponding to the basis, and one non-zero Lyapunov exponent
corresponding to the fiber, which is either $2L$ or $-2L$ according to the
measure.  In particular the dynamics is not
quasiperiodic, even though the rotation vector can be chosen to satisfy a
Diophantine condition.

Around 1990, two key developments complemented Herman's theory.  On the
large $\lambda$ side,
Sorets-Spencer \cite {SS} showed that a minoration of the
Lyapunov exponent can be proved for real analytic functions (and not only
trigonometric polynomials).
On the small $\lambda$ side,
Eliasson \cite {E} proved that a non-standard KAM algorithm allows
to ``control the
dynamics'' for all values of $E$, introducing the concept of {\it almost
reducibility} in this context.  More recently, non-KAM methods \cite {AJ2}
led to the following result: for any fixed real analytic $v$,
if $\lambda$ is small then for every
$\alpha \in \DC$ and any $E \in \R$, the dynamics is almost reducible,
that is, up to coordinate changes, it can be made arbitrarily close to a
product dynamics $(x,y) \mapsto (x+\alpha,A_* \cdot y)$
(as an analytic dynamical system), that is, $A_*$ does not depend on $x$.

Clearly the powerful notion of almost reducibility captures precisely what
it means to ``belong to the domain of KAM techniques''.  So the natural
follow up to the title of this section is: What are then the obstructions to
almost reducibility?  As Herman pointed out, the behavior of the Lyapunov
exponent is part of the answer.  We will show that it is in fact the whole
answer.

Notice that it is not exactly the positivity of the Lyapunov exponent that
is to blame for the failure of almost reducibility: indeed a cocycle may
very well be conjugate to $(\alpha,A_*)$ with $A_*$ a hyperbolic matrix. 
It is only by looking at the complexification (the main idea introduced in
\cite {H}) that one gets to a key feature of constant cocycles
that {\it must} be shared by almost reducible cocycles, called
{\it regularity}:\footnote {The notion of regularity was introduced in \cite
{A1} for $\SL(2,\C)$ valued cocycles, but coincides with the one given here
when restricted to $\SL(2,\R)$ cocycles.}
The Lyapunov
exponent remains constant under small perturbations in the imaginary
direction.  In other words the value of $L$ in (\ref {L}) must remain
unchanged when the integration is taken over $\R/\Z+\epsilon i$. 

The Almost Reducibility Conjecture (ARC) states precisely that regularity is
equivalent to almost reducibility.\footnote {We note that the ARC is usually
stated just in the case of zero Lyapunov exponents, since
the positive Lyapunov exponent case is known.}  It lies clearly beyond the
scope of local reducibility theory, since it assumes no proximity to a well
understood model.\footnote {While non-local reducibility results
exist, based on renormalization, the required
{\it a priori} bounds currently depend on no average growth of
the cocycle (see \cite {AK1}, \cite {AK2}), while a hypothesis on the
Lyapunov exponent can only give slow growth at most.}  We have previously
established the ARC under an exponentially Liouville hypothesis on the
frequency \cite {arac}.  Here we treat the complementary case when
$\beta=\limsup \frac {\ln q_{n+1}} {q_n}=0$, where $q_n$ are
the denominators of the continued fraction approximants of $\alpha$.

\begin{thm} \label {ar}

Let $\alpha \in \R \setminus \Q$ and
$A \in C^\omega(\R/\Z,\SL(2,\R))$ be such that
$(\alpha,A)$ is regular.  Then $(\alpha,A)$ is almost reducible.

\end{thm}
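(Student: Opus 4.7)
The plan is to treat the complementary case $\beta(\alpha)=0$ by a quantitative KAM scheme anchored on the continued fraction scales $q_n$ of $\alpha$. First I would reduce to the subcritical regime, where $L(\alpha,A_\epsilon)\equiv 0$ on a neighborhood $|\epsilon|<\delta$ of the real axis; the supercritical case (positive Lyapunov exponent throughout a complex neighborhood) is already covered by known results, which is why ARC is usually stated only in the vanishing Lyapunov case. Subcriticality, combined with the convexity/subharmonicity of $\epsilon\mapsto L(\alpha,A_\epsilon)$ and uniform convergence of $\frac{1}{n}\int\ln\|A_n(x+i\epsilon)\|\,dx$ to $L(\alpha,A_\epsilon)$, yields the central estimate for what follows: for every $\delta'<\delta$ and $\eta>0$,
\be
\sup_{|\mathrm{Im}\,z|<\delta'}\|A_n(z)\|\le C_{\eta,\delta'}e^{\eta n},
\ee
i.e., sub-exponential growth of iterates uniformly on a complex strip.

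Next, at each scale I would compare $(\alpha,A)$ with the periodic cocycle $(p_n/q_n,A)$. Because the latter has period $q_n$, its dynamics is controlled by the single matrix $A_{q_n}(x)=A(x+(q_n-1)p_n/q_n)\cdots A(x)$; the sub-exponential bound above, together with periodicity, would let me produce an analytic conjugacy of controlled analyticity radius bringing $A_{q_n}(x)$ close to an $x$-independent matrix $A_n^*\in\SL(2,\R)$. The substitution error between $p_n/q_n$ and $\alpha$ is bounded by $|\alpha-p_n/q_n|\le 1/(q_nq_{n+1})$; compounded over $q_n$ iterates against factors of norm $e^{o(q_n)}$, it contributes a discrepancy of order $e^{o(q_n)}/q_{n+1}$, which the hypothesis $\ln q_{n+1}=o(q_n)$ forces to be sub-exponentially small. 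This is the precise point at which $\beta(\alpha)=0$ enters: it provides exactly the margin needed for each KAM step to close while leaving enough room for the next scale.

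The main obstacle will be the treatment of resonances together with the simultaneous control of the analyticity strip through the iteration. At each scale the approximating constant $A_n^*$ may have rotation number resonant with $p_n/q_n$, in which case the naive KAM step collapses; following Eliasson, I would allow conjugacies of nontrivial degree, which effectively renormalizes the rotation number into a non-resonant window and restores the denominators needed to solve the linearized cohomological equation. In parallel, each step erodes the analyticity radius, and ensuring that a positive-width strip survives in the limit requires the sub-exponential bound to dominate the scale-by-scale shrinkage; summing the losses against the margin produced by $\beta(\alpha)=0$ should deliver a limit conjugacy realizing $(\alpha,A)$ as almost reducible to the sequence of constants $A_n^*$.
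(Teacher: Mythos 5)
Your plan is a renormalization/KAM scheme anchored on periodic approximations $(p_n/q_n,A)$ at continued-fraction scales, which is a genuinely different route from the one taken in the paper. The paper does not perform any KAM iteration at all: it instead considers the one-parameter family of complex perturbations $R_{-i\theta}A$, which are uniformly hyperbolic (Lemma \ref{field}), establishes a quantitative lower bound on the angle $d(u_\theta,s_\theta)$ between the invariant directions through a complex strip (Theorem \ref{ang}, proved by combining the Bourgain--Jitomirskaya continuity estimate, a Brownian-motion/subharmonicity argument, and a Kotani-type hyperbolic-geometry lower bound at real $z$), and then directly builds a single holomorphic conjugacy $B$ straightening $u_\theta,s_\theta$ (Sections \ref{mini}, \ref{compsec}, \ref{realsec}). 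The conjugated cocycle is then $\theta^{\kappa_0-o(1)}$-close to a rotation, uniformly on a strip. There is no iteration, no periodic approximant, and no resonance analysis \`a la Eliasson.

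The central step of your proposal contains a genuine gap. You claim that the sub-exponential bound $\|A_n\|_{\delta'}\le C_\eta e^{\eta n}$, together with the $q_n$-periodicity of $(p_n/q_n,A)$, ``would let me produce an analytic conjugacy of controlled analyticity radius bringing $A_{q_n}(x)$ close to an $x$-independent matrix.'' This does not follow. Sub-exponential growth of $\sup_x\|A_{q_n}(x)\|$ gives no control whatsoever on the $x$-variation of $A_{q_n}(x)$, hence no bound on the norm of a conjugacy flattening it; a Floquet-type conjugacy for the periodic cocycle $(p_n/q_n,A)$ has norm governed by that variation. This is precisely the obstruction the paper flags (footnote to the discussion of the ARC): renormalization-based reducibility results, such as Avila--Krikorian, require a priori bounds of the form $\sup_x\|A_{q_n}(x)\|\le C$ along a subsequence, i.e.\ \emph{bounded} rather than merely sub-exponential growth, and such bounds do not follow from regularity, which only yields slow average growth. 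Note also that the polynomial bound $\|A_n\|_\epsilon\le n^{O(1)}$ which the paper does eventually obtain (Lemma \ref{polyn}) is a \emph{consequence} of the conjugacy Theorem \ref{comp}, not an input to it, so it cannot be used to bootstrap a scheme like yours without circularity. The remaining ingredients of your proposal (use of $\beta=0$ to kill the substitution error $|\alpha-p_n/q_n|\le 1/(q_nq_{n+1})$, Eliasson-style resonance handling, control of strip erosion) are standard and would make sense \emph{given} the missing a priori bound, but that bound is the entire content of the theorem and your argument does not supply it.
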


\begin{cor}

Let $\alpha \in \R \setminus \Q$
and $A \in C^\omega(\R/\Z,\SL(2,\R))$ be such that
$(\alpha,A)$ has a Diophantine rotation vector.  Then $(\alpha,A)$ is
regular if and only if $(\alpha,A)$ is analytically conjugate to a constant
translation.

\end{cor}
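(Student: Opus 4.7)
The plan is to derive the corollary from Theorem \ref{ar} by upgrading almost reducibility to full analytic reducibility under the Diophantine rotation vector assumption.

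The easy direction is immediate: if $(\alpha,A)$ is analytically conjugate to a constant translation $(\alpha,A_*)$ with $A_* \in \SO(2,\R)$, then on any horizontal strip $\R/\Z + i\epsilon$ to which the conjugating map extends, the conjugated cocycle is still the constant unitary cocycle $(\alpha,A_*)$ and has Lyapunov exponent $0$. Since analytic conjugacy preserves the Lyapunov exponent on each such line, $L(\epsilon) = 0$ uniformly in a complex neighborhood of the circle, i.e., $(\alpha,A)$ is regular.

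For the nontrivial direction, assume $(\alpha,A)$ is regular and $(\alpha,\rho)$ is Diophantine. By Theorem \ref{ar}, $(\alpha,A)$ is almost reducible, so for every $\delta > 0$ there is an analytic conjugacy $B_\delta$, defined on a uniform complex strip, and a constant $A_*^{(\delta)} \in \SL(2,\R)$ such that the conjugated cocycle $B_\delta(x+\alpha)A(x)B_\delta(x)^{-1}$ is within $\delta$ of $A_*^{(\delta)}$ in the analytic norm. Regularity and the rotation vector are conjugacy invariants, so $A_*^{(\delta)}$ cannot be hyperbolic (a hyperbolic constant would destroy the equality of $L$ under complex perturbation), and since $(\alpha,\rho)$ Diophantine forces $\rho$ to be irrational and indeed to stay away from $\tfrac{1}{2}\Z$, the $A_*^{(\delta)}$ are elliptic with rotation number tending to $\rho$ as $\delta \to 0$. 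One then invokes a classical local reducibility statement in the Diophantine regime (Dinaburg--Sinai \cite{DS}, Eliasson \cite{E}): any analytic cocycle with Diophantine rotation vector $(\alpha,\rho)$ sufficiently close to a constant elliptic rotation is analytically conjugate to a constant rotation. Choosing $\delta$ below the perturbative threshold supplied by this local result applied at $A_*^{(\delta)}$, the conjugated cocycle is reducible; composing with $B_\delta$ shows that $(\alpha,A)$ itself is analytically conjugate to a constant translation.

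The main obstacle is precisely the passage from almost reducibility to full reducibility: one must verify that the constants $A_*^{(\delta)}$ are genuinely elliptic with rotation number approaching $\rho$, and that the perturbative radius required by the local KAM theorem can be achieved by the almost-reducing scale. Both points rest essentially on the Diophantine condition on $(\alpha,\rho)$ and on the preservation of the rotation vector under analytic conjugacy, and are standard once Theorem \ref{ar} is in hand.
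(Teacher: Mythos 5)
Your proposal is correct and takes essentially the same approach as the paper: apply Theorem \ref{ar} to get almost reducibility, then upgrade to full analytic reducibility via a local perturbative result in the Diophantine regime (the easy direction being the conjugacy-invariance of the Lyapunov exponent through a complex strip). The paper's one-line proof cites \cite{AJ2} for the local step, which covers general $\SL(2,\R)$ cocycles near a constant, whereas Dinaburg--Sinai and Eliasson are stated for Schr\"odinger cocycles specifically; otherwise the reasoning coincides, including your correct observation that the Diophantine condition on $(\alpha,\rho)$ forces the limiting constants $A_*^{(\delta)}$ to be elliptic and keeps the perturbative threshold uniform.
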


\begin{proof}

Almost reducibility reduces the analysis to
the local case which was done in \cite
{AJ2}.
\end{proof}

However nice it is to establish a conceptual connection between
Lyapunov exponents and perturbative theory, one is still justified to wonder
what is to be gained in practice.  The answer lies in the recent advances in
\cite {A1} that introduced the tools to analyze the parameter
dependence of the Lyapunov
exponent of complexifications.  Combined with the
results of this paper, one does get a remarkably precise description of the
phase transitions as one consider the global situation, that is, how one
moves from KAM-like to non-uniformly hyperbolic dynamics, the most striking
consequences of which are seen in the application to
the theory of one-frequency Schr\"odinger operators to which we now turn.

\subsection{Global theory of one-frequency Schr\"odinger operators}

Cocycles of the form $(\alpha,A^{(E-v)})$ arise in the consideration of
one-frequency Schr\"odinger operators with frequency $\alpha$ and analytic
potential $v$, i.e., bounded self-adjoint operators
$H=H_{\alpha,v}$ on $\ell^2(\Z)$ of the form\footnote{It is common to
consider a third parameter, a phase $\theta$, in the definition of $H$. 
This corresponds to considering the shifted potential $v_\theta(\cdot)=
v(\theta+\cdot)$.}
\be \label {sch}
(Hu)_n=u_{n+1}+u_{n-1}+v(n \alpha) u_n.
\ee
While such operators have been an
important topic of research since the 1970's, the understanding was mostly
restricted to two local regimes, corresponding to operators with
large-like and small-like potentials.  Only very recently
an approach towards a global theory has emerged \cite {A1}.

One of the most
remarkable accomplishments of the local theories was a good
understanding of the spectral measures in the two respective
regimes: typically point spectrum for large-like potentials \cite {BG}
and absolutely continuous spectrum for small-like potentials (\cite {E},
\cite {BJ2}, \cite {AJ2}, \cite {arac}).  In \cite {A1} we set as one of the
goals of the global theory to prove that
a typical (in a measure-theoretical sense)
operator $H$ has no singular continuous spectrum.  This is achieved here:

\begin{thm} \label {sing}

For a typical Schr\"odinger operator (\ref {sch}), the spectral measures
have no singular continuous component.

\end{thm}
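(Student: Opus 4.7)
The plan is to combine Theorem \ref{ar} of the present paper with the global theory of analytic $\SL(2,\R)$ cocycles developed in \cite{A1} and with the localization results of Bourgain--Goldstein \cite{BG}, in order to realize the ``Spectral Dichotomy'' announced in the abstract: for a typical pair $(\alpha,v)$, the operator $H_{\alpha,v}$ splits spectrally into a ``small-like'' piece, on which the underlying cocycle is almost reducible, and a ``large-like'' piece, on which the Lyapunov exponent is positive.

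First, I would invoke the classification of energies $E\in\Sigma$ introduced in \cite{A1}. For each $E$, look at the Lyapunov exponent $L_\epsilon(E)$ of the complexified cocycle $(\alpha,A^{(E-v)})$ on the shifted circle $\R/\Z+i\epsilon$. Then $E$ is \emph{supercritical} if $L_0(E)>0$, \emph{critical} if $L_0(E)=0$ but $L_\epsilon(E)>0$ for all $\epsilon>0$, and \emph{subcritical} (equivalently, regular with vanishing exponent) if $L_\epsilon(E)=0$ for all $\epsilon$ in some neighborhood of $0$. Denote the resulting decomposition $\Sigma=\Sigma_{\text{sub}}\cup\Sigma_{\text{crit}}\cup\Sigma_{\text{sup}}$.

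On $\Sigma_{\text{sub}}$, the cocycle is regular at every energy, so Theorem \ref{ar} of this paper (for $\beta=0$) together with the main result of \cite{arac} (for $\beta>0$) yields almost reducibility at every such $E$; by the arguments already developed in \cite{AJ2} and \cite{arac}, this implies that the restriction of the spectral measure to $\Sigma_{\text{sub}}$ is purely absolutely continuous. On $\Sigma_{\text{sup}}$, the Lyapunov exponent is positive; for typical $\alpha$ and $v$, the Bourgain--Goldstein theorem \cite{BG} then applies and forces Anderson localization, so that the restriction of the spectral measure to $\Sigma_{\text{sup}}$ is pure point.

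The main obstacle, and the heart of the argument, is to rule out any contribution of $\Sigma_{\text{crit}}$ to the spectral measure. This is where the parameter-dependence analysis of the complexified Lyapunov exponent from \cite{A1} is essential: criticality is a codimension-one condition (the acceleration jumps exactly at $\epsilon=0$), and \cite{A1} shows, using the convex/piecewise-affine structure of $\epsilon\mapsto L_\epsilon(E)$ in both $\epsilon$ and $E$, that for a typical $(\alpha,v)$ the critical set of energies is at most finite, and in particular carries no spectral mass. Assembling the three contributions gives an orthogonal decomposition of every spectral measure into an absolutely continuous part (from $\Sigma_{\text{sub}}$) and a pure point part (from $\Sigma_{\text{sup}}$), with no room left for a singular continuous component, proving Theorem \ref{sing}.
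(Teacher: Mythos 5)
Your proposal follows essentially the same route as the paper: classify energies via \cite{A1}, use the ARC (Theorem \ref{ar} here, together with \cite{arac} for the exponentially Liouville case) to reduce subcritical energies to almost reducibility and hence absolutely continuous spectrum, and use Bourgain--Goldstein \cite{BG} for pure point spectrum on the supercritical part. The one place you deviate is in the treatment of the critical set, and there your argument is both weaker and slightly imprecise. The paper does not argue that the critical set is ``at most finite''; it invokes the acriticality theorem of \cite{A1}, which says that for every irrational $\alpha$ and a typical $v$ the operator has \emph{no} critical energies at all, so $\Sigma=\Sigma_-\sqcup\Sigma_+$ with both pieces compact, open in $\Sigma$, and varying continuously under perturbation. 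With the critical set actually empty, there is nothing to exclude. Your version also contains a small logical slip: a finite set of energies cannot carry singular continuous spectral mass (since such measures are nonatomic), but it certainly \emph{can} carry spectral mass --- eigenvalues --- so the sentence ``carries no spectral mass'' is not correct as stated. Fortunately, for the purpose of ruling out singular continuous spectrum the weaker statement ``carries no nonatomic spectral mass'' suffices, so your conclusion still holds; but the cleaner and actually-used argument is acriticality, not finiteness of the critical set.
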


In fact we get a much more precise description of the spectrum.  Recall that
in \cite {A1}, it is shown that any energy in the spectrum can be
classified into one of three classes, {\it subcritical}, {\it critical} and
{\it supercritical}, so that the critical locus is at most
``codimension-one''.  From the point of view of this classification,
in \cite {A1}, we achieved a best possible
description of the phase transition behavior for a typical quasiperiodic
operator: it is shown that for every $\alpha \in \R
\setminus \Q$ and for a typical $v \in C^\omega(\R/\Z,\R)$, the
corresponding operator is {\it acritical}, that is,
the spectrum $\Sigma=\Sigma_{\alpha,v}$
splits as a disjoint union of compact sets
$\Sigma_-$ and $\Sigma_+$ corresponding to subcritical and supercritical
energies.  Moreover, any small perturbation of $H$ (which may involve both
the frequency and the potential) is acritical as well, and the corresponding
splitting of $\Sigma_{\alpha,v}$
depends continuously on the perturbation.

The definition of criticality in \cite {A1} is based on the
behavior of the Lyapunov exponent of phase complexifications of the cocycle
$(\alpha,A^{(E-v)})$ associated to the Schr\"odinger equation $Hu=Eu$:
for $E$
in the spectrum, subcritical corresponds to regularity, supercritical to
positivity of the Lyapunov exponent, and critical to the complement.
The immediate question such classification
raises is whether the non-critical regimes do behave
according to the local theories, as the terminology suggests.

One of the nice properties of acritical operators is that the Lyapunov
exponent is bounded from below on $\Sigma_+$.
The behavior for energies with bounded from below Lyapunov exponent
was extensively analyzed in the non-perturbative theory of
Bourgain, Goldstein and Schlag, in particular:
\begin{enumerate}
\item Under a full measure arithmetic condition on the frequency, the
integrated density of states (i.d.s.)
is locally H\"older \cite {GS},
\item Up to a typical small perturbation of the frequency, there is
Anderson localization (pure point spectrum with exponentially
decaying eigenfunctions) \cite {BG} and the i.d.s. is absolutely continuous
\cite {GS1}.
\end{enumerate}

The analysis of the behavior at $\Sigma_-$ is much more recent, in fact the
notion which would later be called subcriticality first appears in passing
in \cite {AJ2}, where it is first speculated to be connected to
almost reducibility.  The corresponding region of
the spectrum $\Sigma_{ar}$ (which is open by \cite {arac})
can be analyzed by either KAM theory \cite {E}, \cite {AFK}
or Aubry duality \cite {AJ2}, in particular
\begin{enumerate}
\item The spectral measures are absolutely continuous \cite {AJ2}, \cite
{arac},
\item Under a full measure arithmetic condition on the frequency, the
i.d.s. is locally $1/2$-H\"older
\cite {AJ2}, \cite {amor}.
\end{enumerate}






Our main theorem implies that $\Sigma_-=\Sigma_{ar}$.
Combined with the prevalence of acriticality, we obtain the following
Spectral Dichotomy: a typical operator $H$ can be written as a direct sum
of a large-like operator $H_+$ and a small-like operator $H_-$ with disjoint
spectra (just define $H_\pm$ using the spectral projections associated to
$\Sigma_\pm$).

As a consequence of our understanding of $\Sigma_+$ and $\Sigma_{ar}$, we get
Theorem \ref {sing} (with Anderson localization in $\Sigma_+$ and absolutely
continuous spectrum in $\Sigma_-$).  Moreover, we also have:

\begin{thm}

For a typical Schr\"odinger operator (\ref {sch}), the integrated density of
states is H\"older and absolutely continuous.

\end{thm}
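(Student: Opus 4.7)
The plan is to deduce this theorem as a fairly direct consequence of the Spectral Dichotomy established in the paper, by patching together the two local regularity theories, one on each component of the spectrum. For a typical operator $H$, the Spectral Dichotomy (obtained via \cite{A1}, Theorem \ref{ar}, and \cite{arac}) yields a decomposition $\Sigma = \Sigma_- \sqcup \Sigma_+$ into disjoint compact sets, with $\Sigma_- = \Sigma_{ar}$ the almost-reducible locus and $\Sigma_+$ contained in a region where the Lyapunov exponent of $(\alpha, A^{(E-v)})$ is bounded below by a positive constant; in particular $\mathrm{dist}(\Sigma_-, \Sigma_+) > 0$.

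On $\Sigma_+$, I would invoke the non-perturbative theory of Bourgain--Goldstein--Schlag: under a full-measure arithmetic condition on $\alpha$, \cite{GS} yields local H\"older continuity of the IDS, and \cite{GS1} yields absolute continuity after a typical small perturbation of $\alpha$. On $\Sigma_-$, I would invoke the almost-reducibility theory: \cite{AJ2,amor} give local $1/2$-H\"older regularity under a full-measure arithmetic condition on $\alpha$, while \cite{AJ2,arac} give purely absolutely continuous spectral measures on $\Sigma_-$. Since the IDS is the $\theta$-average of the spectral measure at $\delta_0$ for the translated potential $v(\cdot + \theta)$, absolute continuity of the spectral measures transfers to absolute continuity of $dN$ on $\Sigma_-$.

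To conclude, I would patch the two regimes: $N$ is constant on every connected component of $\R \setminus \Sigma$, and $\Sigma_-$, $\Sigma_+$ are compact and at positive distance, so local H\"older estimates on $\Sigma_-$ and on $\Sigma_+$ assemble into a global H\"older estimate for $N$ on $\R$; similarly, absolute continuity of $dN$ on each of $\Sigma_\pm$ gives absolute continuity of $dN$. The main (and essentially only) obstacle is the bookkeeping of the various typicality hypotheses: one must impose simultaneously (i) typicality in $v$ to get acriticality from \cite{A1}, (ii) a full-measure arithmetic condition on $\alpha$ to feed into \cite{GS} and \cite{AJ2,amor}, and (iii) the typical frequency perturbation required by \cite{GS1} for absolute continuity on the supercritical part. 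All three are compatible with a joint measure-theoretic notion of typicality on $(\alpha,v)$, which is precisely what the statement of the theorem allows, so the argument reduces to checking this compatibility explicitly.
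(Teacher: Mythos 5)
Your proposal is correct and follows exactly the approach the paper intends: the paper gives no explicit proof of this theorem (it is stated as a consequence of the preceding discussion of $\Sigma_+$ and $\Sigma_{ar}$), and your argument is precisely that implicit one --- patch the H\"older and absolute continuity results from the supercritical theory (\cite{GS}, \cite{GS1}) on $\Sigma_+$ with those from the almost-reducible theory (\cite{AJ2}, \cite{amor}, \cite{arac}) on $\Sigma_-=\Sigma_{ar}$, using the positive distance between the two compacts from acriticality. You also correctly flag the only real point of care, namely reconciling the several typicality hypotheses on $(\alpha,v)$.
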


\begin{rem}

Absolute continuity of the i.d.s. implies that the spectrum is ``everywhere
thick'' in the sense that any non-empty intersection with an open set must
have positive Lebesgue measure.
Let us note for completeness that it was known that
the spectrum of a typical operator $H$ is
a Cantor set.  This is a direct
consequence of \cite {GS2} (which shows that up to
a typical perturbation of $\alpha$, $\Sigma_+$ has empty interior) and
\cite {density} (which shows that up to a typical perturbation of $v$,
$\Sigma \setminus \Sigma_+$ has empty interior).

\end{rem}

\begin{rem}

As discussed above, before our work the supercritical theory could be
considered to be much more developed than the subcritical one.
This is reverted with the ARC.  Indeed
the supercritical analysis is still by no means complete in crucial places,
expecially since many
results prove typical properties by a parameter exclusion argument on the
frequency.  While some exclusion is necessary (particularly for
localization), it would be desirable for it
to be given in terms of explicit arithmetic conditions (with any further
parameter
exclusion being relegated to the potential, preferably just through a phase
shift).  On the other hand, in the subcritical region we have now a far
better understanding, with many results (such as absolute continuity of
the spectral measures \cite {arac}) not depending at all on parameter
exclusion.

\end{rem}

\begin{rem} \label {developments}

As we hoped, the Global Theory and the ARC have found many applications.  It
is used for instance in joint work with Jiangong You and Qi Zhou
to solve the Dry Ten Martini problem in the
non-critical case \cite {AYZ}.
Another exciting direction of progress is the use of
duality techniques in this context, which gives new information about the
supercritical region as well (addressing some of the outstanding issues
discussed in the previous remark), as presented by Jitomirskaya in
her Plenary Address at the 2022 ICM.

\end{rem}

{\bf Acknowledgements:} The main result of this paper was proved in 2009
(in the Diophantine case) and 2012 (in the more general $\beta=0$ case). 
Descriptions were given in a course in the Fields institute in 2011,
and during a meeting in Oberwolfach in 2012.
I thank the community for their continuous encouragement to
provide a written account.  To present accurately the history, the
introduction and the bibliography reflect my understanding of the
subject in 2012, with the exception of Remark \ref {developments}.
I thank my PhD students Fernando Argentieri,
Pedram Safaee and Andrea Ulliana for the detailed reading of
this text.

\section{What is happening in the proof}

The following is an explanation more than an outline.  Since
the proof is short we recommend the reader to
first read it and then to come back here for the narrative.

The argument is mostly based on complex analysis, and is mostly
self contained.
The preparatory section \ref {preparation}
in particular collects the main outside ingredients
(coming from \cite {A1} and \cite {BJ1}), their use
is pointed out below.

An $\SL(2,\R)$ cocycle naturally gives rise to a one-parameter family
of perturbations
obtained by composition with rigid rotations $R_\theta$.
For imaginary parameters
$i \theta$, such
cocycles are uniformly hyperbolic at real $z$ (by hyperbolic geometry).
Uniformly hyperbolic cocycles can be
diagonalized by straightening out the unstable and stable directions
$u_\theta$ and $s_\theta$.  If
we have good enough bounds on this conjugacy $B$
(as the perturbation becomes smaller), it will also (complex)
conjugate the initial cocycle close to diagonal.  Indeed if
$\|B\|^2=\theta^{-1+\kappa-o(1)}$ then closeness is $\theta^{\kappa-o(1)}$. 
Getting $\kappa$ non-negative is already non-obvious, but we need a positive
$\kappa$ (and there is not much room, since $\kappa$ can not go beyond $1/2$
when starting with a parabolic cocycle).

The first step is to show that the uniform hyperbolicity of the perturbed
matrices does persist for non-real $z$ since the hyperbolic geometry
argument only works at the real line.  This follows from regularity through
a central result of the first part of \cite {A1}.

Given the existence of the extensions, it is possible to show that
the size of the conjugacy $\|B\|^2$ is basically the inverse of
the angle
between the unstable and stable directions.  This is
actually subtle because we need to obtain holomorphic solutions with optimal
behavior regarding some real geometric constraint, see section \ref {mini}.

The estimate on the angle (section \ref {angle})
is our main concern.  The basic idea is that if
the angle is small then the Lyapunov exponent $L_\theta$
is small.  But if the Lyapunov
exponent is small ($L_\theta=\theta^{\kappa(\theta)}$ with $\kappa(\theta)$
large) then the angle must be often
large ($\theta^{1-\kappa(\theta)}$) at real $z$ (this is a
hyperbolic geometry estimate basic to Kotani theory, and shows in particular
that $0<\kappa(\theta)<1$).  These
two estimates work together to deal with non-real $z$ by subharmonicity. 
Notice that the number $1/2$ appears when combining $\kappa(\theta)$ and
$1-\kappa(\theta)$ and this is essentially sharp as discussed before.

Showing the basic estimate relating angle and the Lyapunov exponent is done
in two rounds.  First we need an a priori bound in order to know that
the angle is not terrible.  This depends on a quantification of the
argument of \cite {A1} which establishes uniform hyperbolicity within
the domain of cocycles which are regular and have positive Lyapunov
exponent.  It shows that weak hyperbolicity (small angle)
can only arise near the boundary of this (infinite dimensional)
domain, so we need to control the
distance to the boundary.  Regularity is not an issue, since it is stable,
but the Lyapunov exponent is relatively close to $0$.  Here we use
a continuity estimate on the Lyapunov exponent (due to \cite {BJ1})
to control how fast it could drop to $0$.  In the second round we show
that the angle satisfies the Lyapunov exponent bound
in a large (equidistributed) set,
and in the presence of the a priori bound this allows us to get this
better bound everywhere through a Brownian motion argument.

With the angle estimate done we can complex conjugate the cocycle close to
rotations (section \ref {compsec}).
This provides in particular good polynomial growth estimates for
the initial cocycle.  More crucially, it gives good bounds on
vector representations $U_\theta=B^{-1} \cdot \begin{pmatrix} i\\1
\end{pmatrix}$ and $S_\theta=B^{-1} \cdot \begin{pmatrix} -i\\1
\end{pmatrix}$ of the unstable and stable directions of the
perturbations.  We also derive a better upper bound on $L_\theta$,
$\kappa(\theta)>1/2-o(1)$.

The conjugacy obtained so far was built on the basis of the unstable and
stable directions of complex perturbations of the cocycle.  Since the
complex perturbations are not real symmetric, neither are the conjugacies
$B$.  In order to fix this issue (section \ref {realsec}), we pair
$u_\theta$ with
its symmetrization $u'_\theta$ and try to straighten then out with
a real symmetric $B_r$. The issue is again to get a lower bound on
the angle.  But now it is easier, since we have vector representations
$U_\theta$ and $U'_\theta$ we can use the determinant $\Delta$ to
get information on the angle.  If the angle is small somewhere (at
some $z_*$) this must propagate through an equidistributed set (of
translates of $z_*$) parallel to the real line.  This makes the determinant
drop on this set, and since the determinant is a nice function a Lagrange
interpolation argument shows that it satisfy a similar bound on the
real line.  But on the real line we have the hyperbolic geometry lower
bound for the angle (now improved due to the new upper bound on $L_\theta$),
and too small determinant means that $U_\theta$ must be small.  If this does
not happen, we get the angle estimate, as shows that the angle could not
have been small at any $z_*$.  If $U_\theta$ is indeed small, we just
observe that $S_\theta$ can not be small at the same time (the determinant
of $U_\theta$ and $S_\theta$ having been normalized as $2 i$ by definition),
so we obtain an angle estimate for the pairing of $s_\theta$ and its
symmetrization $s'_\theta$.

\section{Some ingredients} \label {preparation}

\def\d{{\underline{d}}}
\def\H{{\mathbb {H}}}
\def\PSL{{\mathrm{PSL}}}

Below, once $\alpha \in \R$
and $A \in C^\omega(\R/\Z,\SL(2,\C))$ are fixed, we let $A_n(x)=A(x+(n-1)
\alpha) \cdots A(x)$, $L_n=\frac {1} {n} \int \ln \|A_n(x)\| dx$ and $L=\lim
L_n$.

For $\epsilon>0$,
we write $C^\omega_\epsilon(\R/\Z,*)$ for the space of analytic
functions to the target $*$ (typically $\R$, $\SL(2,\R)$,...)
admitting a bounded holomorphic extension to $|\Im
z|<\epsilon$, and we let $\| \cdot \|_\epsilon$ denote the corresponding
$L^\infty$ norm of the extension.  We also let $\|\cdot\|_0$ denote the
$L^\infty$ norm on $\Im z=0$.

We identify $\P \C^2$ with the Riemann sphere $\overline \C$ in the familiar
way $(x,y) \mapsto \frac {x} {y}$.  The action of $\SL(2,\C)$ on $\overline
\C$ is then given by $\begin {pmatrix} a&b\\c&d \end{pmatrix} \cdot z=\frac
{az+b} {cz+d}$, so $\SL(2,\R)$ consists of matrices preserving the upper
half plane $\H$.
We denote $R_\theta=\begin{pmatrix} \cos 2 \pi \theta&-\sin 2 \pi \theta\\
\sin 2 \pi \theta& \cos 2 \pi \theta \end{pmatrix}$.

We let $d(u,s)$ be the absolute value of the
sine of the angle between $u$ and $s$ on
$\P\C^2$, which is invariant under $\SU(2)$.  We note that $-\ln
d$ is plurisubharmonic in the complement of the diagonal in $\P \C^2 \times
\P \C^2$.

Sometimes it is convenient to move from $\H$
to the disk $\D$, so we denote $\dot{z}=\frac {z-i} {z+i}$.

\subsection{Matrix growth}

The next is Lemma 7 of Bourgain-Jitomirskaya \cite {BJ1}.

\begin{lemma} \label {bjestimate}

For every $\xi,\Xi>0$, there exist constants $C>0$, $c>0$, $c'>0$
with the following properties.  Let $A \in C^\omega_\xi(\R/\Z,\SL(2,\C))$ be
such that $\|A\|_\xi<\Xi$.
Let $\alpha$ and $q$ be such that $|\alpha-\frac {p} {q}|<\frac {1} {q^2}$.
Let $0<\kappa<1$, let $N>C \kappa^{-2} q$ and let $N'$ be a multiple of
$N$ with $\frac {N'} {N}<e^{e^{c \kappa q}}$.  If $L_{2 N}>\frac {9} {10}
L_N$ and $L_N>100 \kappa$ then
\be
|L_{N'}+L_N-2 L_{2N}|<e^{-c' \kappa q}+C \frac {N} {N'}.
\ee

\end{lemma}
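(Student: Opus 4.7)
The plan is to apply the Goldstein--Schlag avalanche principle to the telescoping
\[
A_{N'}(x) = A_N(x+(k-1)N\alpha)\cdots A_N(x+N\alpha)A_N(x), \qquad k:=N'/N,
\]
whose consecutive pairs multiply to $A_{2N}(x+jN\alpha)$. The hypotheses $L_N>100\kappa$ and $L_{2N}>\tfrac{9}{10}L_N$ are the $L^1$ counterparts of the two inputs the principle requires: each block is strongly expanding and consecutive blocks exhibit no cancellation. Once they are upgraded to pointwise bounds on a set of $x$ of measure $1-\delta$, the avalanche principle yields
\[
\ln\|A_{N'}(x)\| = \sum_{j=0}^{k-2}\ln\|A_{2N}(x+jN\alpha)\| - \sum_{j=1}^{k-2}\ln\|A_N(x+jN\alpha)\| + O(ke^{-cL_NN})
\]
outside a set of measure $O(k\delta)$. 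Integrating in $x$ and dividing by $N'=kN$ gives $L_{N'} = \tfrac{2(k-1)}{k}L_{2N}-\tfrac{k-2}{k}L_N + \mathrm{err}$, which rearranges to
\[
L_{N'}+L_N-2L_{2N} = \tfrac{2}{k}(L_N-L_{2N}) + \mathrm{err};
\]
the boundary term is $\leq C/k = CN/N'$ since $L_N-L_{2N}\leq L_N/10$ is bounded in terms of $\Xi$, matching the $CN/N'$ summand in the statement.

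The substantive step is upgrading the $L^1$ hypotheses to the pointwise ones by a subharmonic large-deviation estimate. The functions $\ln\|A_N(z)\|$ and $\ln\|A_{2N}(z)\|$ are subharmonic in $|\Im z|<\xi$ with $L^\infty$ bound $O(N\ln\Xi)$, nonnegative on the real line (since $\SL(2,\C)$-matrices have norm $\geq 1$), and have respective circle averages $NL_N$ and $2NL_{2N}$. A Cartan/Jensen-type argument controls the sublevel sets
\[
\{x:\ln\|A_N(x)\|<\tfrac{1}{2}NL_N\},\qquad \{x:\ln\|A_{2N}(x)\|-\ln\|A_N(x+N\alpha)\|-\ln\|A_N(x)\|<-\tfrac{1}{20}NL_N\}.
\]
The Diophantine hypothesis $|\alpha-p/q|<1/q^2$ lets one replace integral averages by Birkhoff sums over the $(1/q)$-equidistributed orbit $\{x+j\alpha\}_{j<q}$, and the assumption $N>C\kappa^{-2}q$ together with $L_N>100\kappa$ makes the deviation scale $NL_N$ large compared to $q$, so that the resulting LDT yields measure $\leq e^{-c\kappa q}$ for these sublevel sets.

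The main obstacle is propagating this pointwise control along the orbit $\{x+jN\alpha\}_{j<k}$ when $k$ is as large as $e^{e^{c\kappa q}}$. A crude union bound gives failure measure $ke^{-c\kappa q}$, which is $\leq e^{-c'\kappa q}$ only for $k<e^{c''\kappa q}$; to reach the double-exponential threshold one exploits that the orbit is almost-periodic with period $q$, so failure events cluster by period and need be counted only once per cluster, buying an additional exponential factor in the admissible $k$. Combining the avalanche error, the LDT tail, and the boundary term $CN/N'$ produces the claimed estimate $|L_{N'}+L_N-2L_{2N}|<e^{-c'\kappa q}+CN/N'$.
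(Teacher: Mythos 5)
The paper does not prove this lemma: it is quoted (with a cosmetic weakening of the hypothesis, as noted in the remark that follows it) from Lemma~7 of Bourgain--Jitomirskaya \cite{BJ1}, and the ``proof'' in the paper is simply the attribution. Your proposal is an attempt to reconstruct the BJ argument, and you correctly identify the two main ingredients --- the Goldstein--Schlag avalanche principle applied to blocks of length $N$, and a subharmonic large-deviation estimate at scale $q$ --- as well as the source of the $C N/N'$ boundary term.

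There is, however, a genuine gap in the step that matters most: reaching the doubly exponential range $N'/N < e^{e^{c\kappa q}}$. You acknowledge that a union bound over the $k=N'/N$ translates of the LDT bad set only tolerates $k\lesssim e^{c''\kappa q}$, and propose to bridge the remaining exponential gap by a ``clustering'' of failure events tied to an approximate period $q$ of the orbit. This does not work: the orbit in question is generated by $N\alpha$, not by $\alpha$, and its almost-periods are governed by the continued fraction expansion of $N\alpha$ rather than of $\alpha$; moreover the bad set, being a sublevel set of the subharmonic function $\ln\|A_N\|$, has no structural rigidity that would make its translates coalesce rather than spread. The mechanism actually used in \cite{BJ1} is an inductive bootstrap in the scale: a single pass of avalanche~$+$~LDT pushes from $N$ to $N_1 \sim N e^{c\kappa q}$ \emph{and} certifies, with errors of size $e^{-c\kappa q}$, that $L_{N_1},L_{2N_1}$ inherit the standing hypotheses $L_{N_1}>\text{(const)}\cdot\kappa$ and $L_{2N_1}>\tfrac{9}{10}L_{N_1}$ (indeed with a smaller deficit $L_{N_1}-L_{2N_1}$ than before). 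One then reapplies the avalanche principle with block length $N_1$ to gain another factor $e^{c\kappa q}$, and iterates; the hypotheses self-reproduce, the error terms sum to $O(e^{-c'\kappa q})$, and the iteration can be run for on the order of $e^{c\kappa q}/(\kappa q)$ steps, which is what produces the doubly exponential range. Without this bootstrap the argument stalls at a singly exponential $N'/N$.
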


\begin{rem}

The result is originally stated for (real)
Schr\"odinger cocycles, but the proof
extends without change.  The original statement also assumes $L>100 \kappa$,
but the proof only uses the weaker condition discussed here.

\end{rem}

\begin{lemma} \label {bjcor}

There exists
$M>1$ with the following property.  Let $\alpha \in \R \setminus \Q$ be such
that $\beta=0$.  Then for every
$\xi,\Xi>0$, if $q=q_n$ is sufficiently large and
$A \in C^\omega_\xi(\R/\Z,\SL(2,\C))$ is
such that $\|A\|_\xi \leq \Xi$ and $L=0$ then $L_{q^M}<q^{-1/M}$.

\end{lemma}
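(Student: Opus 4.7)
The plan is to rule out the non-drop case in a dichotomy along a dyadic scale chain, using Lemma \ref{bjestimate} iterated across successive continued-fraction denominators, thereby forcing geometric decay of $L_{N_k}$.

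\textbf{Setup.} Fix $M$ large. Set $\kappa = q^{-1/M}/200$ and $N_0 = C\kappa^{-2} q \asymp q^{1+2/M}$, which meets the minimal scale in Lemma \ref{bjestimate}. Consider the dyadic chain $N_k = 2^k N_0$; by subadditivity of $n \mapsto n L_n$, the sequence $L_{N_k}$ is non-increasing. At each step for which $L_{N_k} > 100\kappa$, I run the dichotomy: either (a) $L_{N_{k+1}} < \tfrac{9}{10} L_{N_k}$ (a definite drop), or (b) $L_{N_{k+1}} \geq \tfrac{9}{10} L_{N_k}$, in which case Lemma \ref{bjestimate} is applicable with this $\kappa$.

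\textbf{Ruling out case (b).} Suppose (b) at $N_k$, and let $\Lambda_k = 2L_{N_{k+1}} - L_{N_k}$. Taking $N'/N_k$ close to the maximum $e^{e^{c\kappa q}}$, both error terms $e^{-c'\kappa q}$ and $CN_k/N'$ in Lemma \ref{bjestimate} are negligible, giving $L_{N'} = \Lambda_k + O(e^{-c'\kappa q})$. Hence $L$ is essentially constant at $\Lambda_k$ over the enormous range $[N_k, N_k e^{e^{c\kappa q}}]$. Now apply Lemma \ref{bjestimate} again with $\tilde q = q_{n+1}$ at some scale $\tilde N$ inside this range: the scale threshold $C\kappa^{-2}\tilde q = C q^{2/M} q_{n+1}$ is easily met because $\beta = 0$ gives $q_{n+1} \leq e^{\epsilon q}$ for arbitrarily small $\epsilon$, while $\tilde N$ is super-exponentially large in $q$. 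The conclusion propagates $L_{\tilde N'} = \Lambda_k + O(\mathrm{err})$ to a super-exponentially larger scale range. Iterating with $q_{n+1}, q_{n+2}, \ldots$ extends this equality to all sufficiently large $N$. Since $L_N \to L = 0$, this forces $\Lambda_k = O(e^{-c q^{1-1/M}})$. But (b) gives $\Lambda_k \geq \tfrac{4}{5} L_{N_k} > 80\kappa$, contradicting the previous estimate for $q$ large.

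\textbf{Conclusion.} Hence case (a) holds at every step with $L_{N_k} > 100\kappa$, so $L_{N_k} \leq (9/10)^k \log \Xi$. After $K \leq C \log q$ doublings, $L_{N_K} < 100\kappa = q^{-1/M}/2$. The total scale $N_K = 2^K N_0$ is at most $q^{1 + O(1/M)}$, well below $q^M$ for $M \geq 5$, say. A standard subadditivity correction (writing $q^M = j N_K + r$ with $r < N_K$ and using $L_r \leq \log \Xi$) then gives $L_{q^M} \leq L_{N_K} + O(N_K/q^M)\log\Xi < q^{-1/M}$.

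\textbf{Main obstacle.} The technical heart is the propagation step in ruling out case (b): iterating Lemma \ref{bjestimate} across the successive denominators $q_{n+j}$ requires compatibility of the scale and error constraints at each application. This is exactly where $\beta = 0$ enters: the subexponential growth $q_{n+j+1} \leq e^{\epsilon q_{n+j}}$ keeps BJ's threshold $C\kappa^{-2} q_{n+j+1}$ well below the scales already reached through iteration, while the cumulative errors $\sum_j e^{-c'\kappa q_{n+j}}$ remain dominated by the initial $O(e^{-c q^{1-1/M}})$, so the argument closes uniformly in the number of iterations.
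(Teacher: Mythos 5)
Your proposal follows essentially the same route as the paper: run a dyadic dichotomy (definite $9/10$-drop vs.\ Lemma~\ref{bjestimate} applicable), and in the no-drop case iterate Lemma~\ref{bjestimate} along later continued-fraction denominators to pin $L_{N}$ near $\Lambda_k=2L_{2N_k}-L_{N_k}>0$ out to arbitrarily large scales, contradicting $L=0$. The paper phrases this as a contradiction starting from the assumption $L_{q^M}\ge q^{-1/M}$ (which produces a ``no-drop'' scale $N$ with $L_N>q^{-\sigma}$, $L_{2N}>\tfrac{9}{10}L_N$) rather than by ruling out case (b) at every dyadic step, but the two formulations are logically interchangeable, and your ``conclusion'' step (geometric decay down to $100\kappa$ plus a subadditivity correction) matches the paper's implicit bookkeeping.

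One technical point to fix in the propagation step: you reapply Lemma~\ref{bjestimate} at the new scale $\tilde N$ \emph{with the same $\kappa$}, but you only know $L_{\tilde N}\approx\Lambda_k>80\kappa$, whereas the hypothesis requires $L_{\tilde N}>100\kappa$. The paper sidesteps this by choosing, at each round $j$, a fresh $\kappa^{(j)}$ tied to the super-exponentially growing denominator $q^{(j)}$ (it takes $q^{(j+1)}$ minimal with $q^{(j+1)}>\exp((q^{(j)})^{1/2})$ and $\kappa^{(j+1)}=(q^{(j+1)})^{-\sigma}/100\ll\kappa^{(j)}$), so the lower bound $L_{N^{(j)}}\gtrsim\Lambda_k$ trivially dominates $100\kappa^{(j)}$. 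You can equivalently shrink your $\kappa$ to a fixed fraction of $\Lambda_k$ before each reapplication, since $\Lambda_k$ is bounded below uniformly through the iteration; you should also note explicitly that $L_{2\tilde N}>\tfrac{9}{10}L_{\tilde N}$ holds at the new scale because both quantities are within the (summable, super-exponentially decaying) cumulative error of $\Lambda_k$. With those adjustments the argument closes exactly as you describe and coincides with the paper's.
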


\begin{proof}

Fix some small $\sigma>0$.
Assume that $L_{q^M} \geq q^{-1/M}$ with $M$ large.
Then there exists some $q^5<N<q^M$
such that $L_N>q^{-\sigma}$ and $L_{2N}>\frac {9} {10} L_N$
(for instance, $N$ a suitable power of $2$). 
Let $q'=q_{n'}$ be minimal with $q'>e^{q^{1/2}}$,
and let $N'>q'^5$ be the smallest multiple
of $N$.  Then $N'/N=e^{o(e^{q^{1/2}})}<e^{e^{c \kappa q}}/2$, where   
$\kappa=q^{-\sigma}/100$ (here we use $\beta=0$).
It follows from Lemma \ref {bjestimate}
that $L_{N'}$ and $L_{2 N'}$ are
$e^{-c' \kappa q}+C q^M e^{-5 q^{1/2}}<e^{-4 q^{1/2}}$
close to $2 L_{2 N}-L_N$.  This allows us to
iterate the procedure starting with $N'$ instead of $N$.  We conclude that
$L>L_N/2>0$.
\end{proof}

\subsection{Complex perturbations}

The result below is of a quite general nature, holding
for continuous cocycles over a homeomorphism of a compact
metric space.


\begin{lemma} \label {field}

Let $\alpha \in \R \setminus \Q$ and $A \in
C^\omega(\R/\Z,\SL(2,\R))$.
Then for every $\theta>0$, $(\alpha,R_{-i \theta} A)$ is uniformly
hyperbolic, and the unstable direction satisfies $u(x) \in \H$ and
$|\dot{u}(x)|<e^{-4 \pi \theta}$.
Moreover
\be
L(\alpha,R_{-i \theta} A)=2 \pi \theta+\frac {1} {2} \int_{\R/\Z} \ln
\frac {1-|\dot {u}(x)|^2}
{1-e^{8 \pi \theta} |\dot{u}(x)|^2} dx \geq 2 \pi \theta.
\ee

\end{lemma}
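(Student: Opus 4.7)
The plan is to recognize $R_{-i\theta}$ as a hyperbolic contraction in the disk model, deduce uniform hyperbolicity by a Banach-fixed-point argument, and extract the Lyapunov exponent formula from the isometry identity of $\SU(1,1)$.

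From $(R_\theta)'(i) = e^{-4\pi i\theta}$ (a direct computation using $R_\theta'(z) = (\sin(2\pi\theta)z+\cos(2\pi\theta))^{-2}$) one sees that $R_\theta$ fixes $i$ and acts on the disk coordinate $\zeta = \dot z$ as rotation by $-4\pi\theta$; specialising to imaginary angle, $R_{-i\theta}$ fixes $\pm i$ and becomes the scaling $\zeta\mapsto e^{-4\pi\theta}\zeta$ of $\D$. Hence $R_{-i\theta}$ sends $\overline{\H}$ into $\{|\dot z|\le e^{-4\pi\theta}\}\subsetneq\H$ and strictly contracts the Poincar\'e metric with factor at most $e^{-4\pi\theta}$. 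Since each $A(x)\in\SL(2,\R)$ is a Poincar\'e isometry, $R_{-i\theta}A(x)$ is a uniform strict hyperbolic contraction. Iterating backward, $(R_{-i\theta}A)_n(x-n\alpha)\cdot w$ is $d_\H$-Cauchy (with geometric rate $e^{-4\pi n\theta}$) for every $w\in\H$; its limit is the invariant unstable section $u(x)$, satisfying $|\dot u(x)|<e^{-4\pi\theta}$. The parallel argument on $-\H$, using $R_{-i\theta}^{-1}=R_{i\theta}$, produces the stable section $s(x)\in-\H$, giving uniform hyperbolicity.

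For the Lyapunov exponent, choose $Q\in\SL(2,\C)$ implementing the Cayley transform, so that $Q^{-1}R_{-i\theta}Q = D:=\mathrm{diag}(e^{-2\pi\theta},e^{2\pi\theta})$ and $Q^{-1}\SL(2,\R)Q=\SU(1,1)$. The conjugated cocycle is $D\tilde A(x)$ with $\tilde A(x) = \begin{pmatrix}a(x)&b(x)\\\bar b(x)&\bar a(x)\end{pmatrix}$, $|a|^2-|b|^2=1$, and unstable section $\tilde\zeta(x):=\dot u(x)\in\D$. Taking the homogeneous lift $V(x)=(\tilde\zeta(x),1)$, the identity $D\tilde A(x)V(x)=\tilde\mu(x)V(x+\alpha)$ reads off, from the second component,
\[
\tilde\mu(x)=e^{2\pi\theta}\bigl(\bar b(x)\tilde\zeta(x)+\bar a(x)\bigr),
\]
and, from consistency with the first component, the Möbius equation $\tilde A(x)\tilde\zeta(x) = e^{4\pi\theta}\tilde\zeta(x+\alpha)$ in $\D$.

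The crucial ingredient is the Poincar\'e-isometry condition for $\tilde A\in\SU(1,1)$: since its Möbius derivative at $\tilde\zeta$ has modulus $|\bar b\tilde\zeta+\bar a|^{-2}$, one has $1-|\tilde A\tilde\zeta|^2 = (1-|\tilde\zeta|^2)/|\bar b\tilde\zeta+\bar a|^2$. Substituting the Möbius equation gives
\[
|\bar b(x)\tilde\zeta(x)+\bar a(x)|^2 = \frac{1-|\tilde\zeta(x)|^2}{1-e^{8\pi\theta}|\tilde\zeta(x+\alpha)|^2}.
\]
Uniform hyperbolicity together with $V$ spanning the unstable bundle gives the standard identity $L=\int\ln|\tilde\mu(x)|\,dx$, via the telescoping $\ln\|A_n V\| = \ln|\tilde\mu_n|+\ln\|V(\cdot+n\alpha)\|$ combined with Birkhoff. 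Taking logarithms in the previous display, integrating over $\R/\Z$ and using translation invariance in $x$ produces the claimed formula, with $L\ge 2\pi\theta$ following from $e^{8\pi\theta}>1$ (which forces the log integrand to be non-negative). The only technically delicate point is the $\SU(1,1)$-isometry identity; the rest is dictated by the geometric role of $R_{-i\theta}$.
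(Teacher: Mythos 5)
Your proof is correct and in substance matches the paper's. The paper establishes uniform hyperbolicity by the cone-field criterion (the image $\H_\theta$ is precompact in $\H$), while you phrase the same fact as strict Poincar\'e contraction plus a fixed-point argument; these are interchangeable. For the formula, the paper invokes the general principle that $L$ equals $-\tfrac12$ times the average logarithmic rate of projective contraction of the inclusion $\H_\theta\hookrightarrow\H$ at $u$, computed in the Poincar\'e metric, and leaves the computation implicit; you carry out exactly this computation concretely in the disk model, conjugating to $\SU(1,1)$, identifying the unstable-line multiplier $\tilde\mu(x)=e^{2\pi\theta}(\bar b\tilde\zeta+\bar a)$, and using the $\SU(1,1)$ isometry identity $1-|\tilde A\zeta|^2=(1-|\zeta|^2)/|\bar b\zeta+\bar a|^2$ together with $\tilde A\cdot\tilde\zeta(x)=e^{4\pi\theta}\tilde\zeta(x+\alpha)$ and translation invariance. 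The lower bound $L\ge 2\pi\theta$ then falls out since $|\dot u|<e^{-4\pi\theta}$ makes the integrand nonnegative. So your write-up is a more explicit version of the paper's sketch rather than a genuinely different route; the only superficial difference is multiplier-on-the-unstable-bundle versus projective-derivative-at-the-unstable-direction, and these coincide since the projective derivative of an $\SL(2,\C)$ matrix at $u$ is the inverse square of the second-coordinate multiplier $cu+d$.
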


\begin{proof}

It is obvious to check that the action of $R_{-i \theta} A$ takes
$\H$ into the precompact subset
$\H_\theta=\{|\dot{x}|<e^{-4 \pi \theta}\} \subset \H$, thus
uniform hyperbolicity follows by the usual cone field criterium.
The Lyapunov exponent can then be computed as $-\frac {1} {2}$
the average rate of projective contraction of the inclusion
$\H_\theta \to \H$ at the unstable direction,
calculated with respect to any conformal metric: the given formula
corresponds to the choice of the Poincar\'e metric on $\H$.
\end{proof}

\subsection{Robustness of uniform hyperbolicity and angle estimate}

The next result
is a quantification of a basic estimate in the proof of Theorem 6
of \cite {A1}.

\begin{thm} \label {harmo}

Let $\alpha \in \R \setminus \Q$ and let $A \in
C^\omega_{\epsilon_0}(\R/\Z,\SL(2,\C))$ be such that
$(\alpha,A(x+t i))$ is uniformly hyperbolic for every
$|t|<\epsilon_0$ and let $u$ and $s$ be the unstable and stable
directions.  Let
$0<\epsilon_1<\epsilon_0$, $0<\epsilon_2<\min
\{\epsilon_1,\epsilon_0-\epsilon_1\}$ and
$0<\delta<1$ be such that for both $t=\epsilon_1$ and $t=-\epsilon_1$,
for every $\tilde A \in C^\omega_{\epsilon_2}(\R/\Z,\SL(2,\C))$
such that  $\|\tilde A(x)-A(x+t i)\|_{\epsilon_2}<\delta$ the cocycle
$(\alpha,\tilde A)$ is uniformly hyperbolic.  Then $\inf_{x \in \R/\Z}
d(u(x),s(x)) \geq C^{-1} \delta$, where
$C=C(\epsilon_0,\epsilon_1,\epsilon_2,\|A\|_{\epsilon_0})$.

\end{thm}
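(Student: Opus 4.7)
My plan is to derive the angle bound $d(u(x), s(x)) \geq C^{-1}\delta$ on $\R/\Z$ in two stages: first I establish the bound on the two boundary lines $\Im z = \pm \epsilon_1$ via a destabilizing perturbation argument that uses the robustness hypothesis, and then I propagate it into the strip $\{|\Im z| < \epsilon_1\}$ by subharmonicity and the maximum principle.

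The propagation step is routine. The function $z \mapsto -\ln d(u(z), s(z))$ is subharmonic on $\{|\Im z| < \epsilon_0\}$, being the pullback of the plurisubharmonic function $-\ln d$ on $\P\C^2 \times \P\C^2 \setminus \mathrm{diag}$ by the holomorphic map $z \mapsto (u(z), s(z))$. On the cylinder $\R/\Z \times [-\epsilon_1, \epsilon_1]$ the maximum principle gives that an upper bound for $-\ln d$ on the boundary transports to the interior, in particular to the real axis. So it suffices to establish $d(u(z), s(z)) \geq c\delta$ for $\Im z = \pm \epsilon_1$.

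For the boundary bound I argue by contradiction. By symmetry treat $\Im z = +\epsilon_1$ and suppose $\eta := d(u(z_*), s(z_*)) < c\delta$ at some $z_* = x_* + \epsilon_1 i$. I plan to construct a holomorphic perturbation $\tilde A(x) = B(x) A(x + \epsilon_1 i)$ in $C^\omega_{\epsilon_2}(\R/\Z, \SL(2,\C))$, with $B(x) = I + \tau E(x)$, such that $\|\tilde A - A(\,\cdot\,+\epsilon_1 i)\|_{\epsilon_2} < \delta$ and $(\alpha, \tilde A)$ is not uniformly hyperbolic, contradicting the hypothesis. In the uniformly hyperbolic regime (which persists by hypothesis for small $\tau$) the invariant sections $\tilde u, \tilde s$ of $(\alpha, \tilde A)$ depend real-analytically on $\tau$ via the implicit function theorem; their first-order variations satisfy a linear cohomological equation whose solution operator is bounded in terms of $\|A\|_{\epsilon_0}$, the hyperbolicity exponent on $\Im z = \epsilon_1$, and the geometry of the $(u,s)$-basis. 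Choosing $E$ adapted to the local geometry at $z_*$, namely so that the linearized deformation pushes $\tilde u(x_*)$ toward $s(z_*)$ and $\tilde s(x_*)$ toward $u(z_*)$, a coincidence $\tilde u(x_*) = \tilde s(x_*)$ can be produced at some $\tau$ of order $\eta^2$, the extra factor $1/\eta$ coming from inversion through the near-degenerate $(u,s)$-basis at $z_*$. Since the orbit of $x_*$ under rotation by $\alpha$ is dense and the sections are continuous, this one-point coincidence propagates (by iterating the invariance equation) to $\tilde u \equiv \tilde s$ globally, so $(\alpha, \tilde A)$ fails to be uniformly hyperbolic. Hence $\eta^2 \gtrsim \delta$, and since $\eta \leq 1$ this forces $\eta \geq \eta^2 \gtrsim \delta$, which is the claimed linear bound.

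The main obstacle I anticipate is the quantitative sensitivity analysis in the space $C^\omega_{\epsilon_2}$: one must show that the cohomological equation governing the variation of the invariant sections has a resolvent bounded purely in terms of $\epsilon_0, \epsilon_1, \epsilon_2, \|A\|_{\epsilon_0}$. The buffer $\epsilon_2 < \min\{\epsilon_1, \epsilon_0 - \epsilon_1\}$ afforded by the hypothesis is used precisely to pass from holomorphicity on a slightly thicker strip to quantitative bounds on $\epsilon_2$-thickenings of $\Im z = \pm\epsilon_1$. This quantitative robustness of uniform hyperbolicity is exactly what the theorem encapsulates.
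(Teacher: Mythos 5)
Your reduction via subharmonicity of $z\mapsto-\ln d(u(z),s(z))$ to the boundary lines $\Im z=\pm\epsilon_1$ is fine, but the boundary estimate itself has a genuine gap, and the quantitative claim on which it rests is actually false. You assert that if $\eta:=d(u(z_*),s(z_*))$ is small at some $z_*$ with $\Im z_*=\epsilon_1$, then a perturbation of size $\sim\eta^2$ can collapse the invariant sections and destroy uniform hyperbolicity. This cannot be right as stated: take a constant cocycle $A=\left(\begin{smallmatrix}\lambda&\mu\\0&\lambda^{-1}\end{smallmatrix}\right)$ with $\lambda>1$ fixed and $\mu\to\infty$. Then $u\equiv\infty$, $s\equiv-\mu/(\lambda-\lambda^{-1})$, so $\eta\sim(\lambda-\lambda^{-1})/\mu\to 0$, yet a perturbation of definite size $\sim\lambda-1$ is needed to leave the uniformly hyperbolic locus. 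So there is no uniform relation of the form ``small angle implies small destabilizing perturbation''; the theorem's constant must depend on $\|A\|_{\epsilon_0}$, and your scheme has no mechanism that makes this dependence enter. The paper makes it enter through the Lyapunov exponent: $L$ is pluriharmonic on the UH locus, bounded above by $\ln\|A\|_{\epsilon_0}+O(1)$, and (Lemma~9 of \cite{A1}) its directional derivative with respect to the elementary perturbations $\cE^{\lambda w}_j$ at $t=\pm\epsilon_1$ is $\Re\int Q_j(x\pm\epsilon_1 i)w(x)\,dx$, where $Q_2,Q_3$ are holomorphic functions with $\max\{1,|Q_2|,|Q_3|\}\asymp d(u,s)^{-1}$. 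The Cauchy estimate for a bounded pluriharmonic function on a $\delta$-ball then bounds each Fourier coefficient of $Q_j$ by $C\delta^{-1}e^{(\epsilon_2-\epsilon_1)|k|}$, which sums (using $\epsilon_2<\epsilon_1$) to $\|Q_j\|_0\le C\delta^{-1}$ and hence $d(u,s)\ge C^{-1}\delta$ on $\Im z=0$.

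Even setting aside the incorrect scaling, the construction you sketch is not carried out: the choice of $E$ ``adapted to the local geometry,'' the implicit-function-theorem analysis of $\tau\mapsto(\tilde u_\tau,\tilde s_\tau)$, the resolvent bound for the linearized cohomological equation, and the claimed collision time are all asserted, not proved, and you yourself flag this sensitivity analysis as the main obstacle. (The remark that a one-point coincidence of $\tilde u$ and $\tilde s$ ``propagates'' to all of $\R/\Z$ is also unnecessary: for a uniformly hyperbolic cocycle the two sections are disjoint at every point by definition, so a single coincidence already contradicts UH.) The upshot is that your proposal replaces the paper's short and self-contained derivative-of-$L$ computation with a destabilization argument whose quantitative core is both unproven and, as stated, not true without the dependence on $\|A\|_{\epsilon_0}$ that pluriharmonicity of the Lyapunov exponent supplies for free.
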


\begin{proof}

Given two distinct
directions
$\mu,\nu \in \P\C^2$,
let $\begin{pmatrix} a&b\\c&d \end{pmatrix} \in \SL(2,\C)$
be any matrix whose columns are aligned with $\mu$ and $\nu$.  Then
$Q_2=Q_2(\mu,\nu)=-ab$ and $Q_3=Q_3(\mu,\nu)=cd$ are well defined and depend
holomorphically on $\mu$ and $\nu$.
We notice that $\eta \leq d(\mu,\nu)^{-1} \leq \sqrt{5} \eta$
where $\eta=\max \{1,|Q_2|,|Q_3|\}$.\footnote {Obviously $d(\mu,\nu)^{-2}
\geq 1$, and since
$d(\mu,\nu)^{-2}=(|a|^2+|c^2|) (|b|^2+|d|^2)$, we have the estimate
$Q_2^2+Q_3^2 \leq
(|a|^2+|c^2|) (|b|^2+|d|^2) \leq (|ab|+|cd|)^2+(|ad|-|bc|)^2
\leq (Q_2+Q_3)^2+1
\leq 5 \eta^2$.
}


For $j=2,3$, let $Q_j(z)=Q_j(u(z),s(z))$ be defined on $|\Im z|<\epsilon_0$.
We need to show that $\|Q_j\|_0 \leq C \delta^{-1}$, $j=2,3$.

For $w \in C^\omega(\R/\Z,\C)$ let
$\cE^w_2=\begin{pmatrix} 1&w\\0&1 \end{pmatrix}, \cE^w_3=
\begin{pmatrix} 1&0\\w&1 \end{pmatrix} \in C^\omega(\R/\Z,\SL(2,\C))$.

By Lemma 9 of \cite {A1}, for any $|t|<\epsilon_0$, and for $j=2,3$,
the derivative of $\lambda
\mapsto L(\alpha,A(x+t i) \cE^{\lambda w}_j)$ at $\lambda=0$ is given by
$\Re \int_{\R/\Z} Q_j(x+t i) w(x) dx$.
The hypothesis implies that for $t=\pm \epsilon_1$,
the Lyapunov exponent is a
bounded pluriharmonic function on a ball of radius $\delta$ around $A(x+t
i)$ in $C^\omega_{\epsilon_2}(\R/\Z,\SL(2,\C))$, implying
\be
\left |\Re \int_{\R/\Z} Q_j(x \pm \epsilon_1 i) w(x) dx\right|
\leq C' \delta^{-1}
\|w\|_{\epsilon_2},
\ee
for $j=2,3$.  By considering monomials $w=\gamma e^{-2 \pi i k x}$ with
$k \in \Z$ and $|\gamma|=1$,
this shows that the $k$-th Fourier coefficient of $Q_j$ must be bounded by
$C' \delta^{-1} e^{(\epsilon_2-\epsilon_1)|k|}$ as desired.
\end{proof}

\section{Angle estimate} \label {angle}

Through this section, we fix $\alpha \in \R \setminus \Q$ and $A \in
C^\omega_{\epsilon_0}(\R/\Z,\SL(2,\R))$ such that $L(\alpha,A(x+t i))=0$ for
$|t|<\epsilon_0$.
By stability of regularity \cite {A1}, for every
$0<\epsilon<\epsilon_0$, there exists $\theta(\epsilon)>0$
such that for every $0<\theta<\theta(\epsilon)$,
$L(\alpha,R_{-i \theta} A(x+t i))$ is a constant $L_\theta$
for $|t|<\epsilon$.
Since $L_\theta \geq 2 \pi \theta$, by Lemma \ref {field}, we can
write $L_\theta=\theta^{\kappa(\theta)}$ with $\kappa(\theta) \leq 1$.
Since regularity with positive Lyapunov exponent implies uniform
hyperbolicity \cite {A1}, $(\alpha,R_{-i \theta} A(x+t i))$ is
uniformly hyperbolic for $0<\theta<\theta(\epsilon)$ and $|t|<\epsilon$.
For $0<\theta<\theta(\epsilon)$, let
$u_\theta,s_\theta$ be the unstable and stable directions, which are
holomorphic functions on $|\Im x|<\epsilon$ with values in $\P \C^2$.

The proof of the following result will take the remaining of this section:

\begin{thm} \label {ang}

Assume that $\beta=0$, let $0<\epsilon<\epsilon_0$ and let
$0<\kappa<\min \{\frac {1} {2},1-\frac {\epsilon} {\epsilon_0}\}$.  Then
for every $\theta>0$ sufficiently small we have
\be
d(u_\theta(x),s_\theta(x)) \geq \theta^{1-\kappa}, \quad |\Im x|<\epsilon.
\ee
Moreover we also have
\be
d(u_\theta(x),s_\theta(x)) \geq \theta^{\kappa(\theta)+o(1)},
\quad |\Im x|<\epsilon_0-o(1).
\ee
   
\end{thm}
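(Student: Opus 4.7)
The plan is to show both inequalities as upper bounds on the subharmonic function $\varphi_\theta(z) := -\ln d(u_\theta(z), s_\theta(z))$, via two rounds as in the paper's narrative: an a priori bound throughout the strip (from Theorem \ref{harmo}) and a sharper Kotani-type bound on $\R/\Z$ (from Lemma \ref{field}), combined via a subharmonic (Phragm\'en--Lindel\"of / three-lines) extension.

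\emph{A priori bound.} Applying Theorem \ref{harmo} to $R_{-i\theta}A$, the persistence radius $\delta$ of uniform hyperbolicity can be controlled: a perturbation of size $\delta$ preserves regularity by \cite{A1} and remains uniformly hyperbolic as long as the Lyapunov exponent stays positive (also \cite{A1}). Combining Lemma \ref{bjcor} with $L_\theta \geq 2\pi\theta$ shows that, under the hypothesis $\beta = 0$, the Lyapunov exponent remains positive under perturbations of size $\delta = \theta^{1+o(1)}$. Theorem \ref{harmo} then gives $\varphi_\theta(z) \leq (1+o(1))\ln(1/\theta)$ uniformly in a slightly reduced strip.

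\emph{Kotani estimate on the real line.} On $\R/\Z$, the reality of $A$ lets one identify $d(u_\theta, s_\theta)$ with a geometric quantity measuring how far $u_\theta \in \H$ sits from the real axis. Taylor expanding the integrand in Lemma \ref{field} in the regime $d \gg \theta$ yields integrand $\approx 4\pi\theta / d(u_\theta, s_\theta)$, so $L_\theta = \theta^{\kappa(\theta)}$ produces $\int (1/d)\,dx \lesssim \theta^{\kappa(\theta)-1}$, and Chebyshev gives $d(u_\theta, s_\theta) \geq c\theta^{1-\kappa(\theta)+\sigma}$ on a subset $E_\theta \subset \R/\Z$ of measure $\geq 1 - C\theta^\sigma$, for any small $\sigma > 0$.

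\emph{Subharmonic extension.} Split the strip $|\Im z| < \epsilon_0$ along $\R/\Z$ into upper and lower half-strips. On each, $\varphi_\theta$ is subharmonic, with boundary values controlled on the edge $|\Im z| = \epsilon_0$ (by the a priori bound) and on $\R/\Z$ (by the Kotani bound on $E_\theta$, a priori on $E_\theta^c$). For $z_0$ at height $h \in (0, \epsilon_0)$, harmonic measure from $z_0$ weighs $\R/\Z$ by $1 - h/\epsilon_0$ and the edge by $h/\epsilon_0$; provided the bad set $E_\theta^c$ has been arranged to contribute negligibly (of order $o(1)$), one gets
\[
\varphi_\theta(z_0) \leq \bigl[\,1 - (1 - h/\epsilon_0)\kappa(\theta) + o(1)\,\bigr] \ln(1/\theta).
\]
Choosing $h = \epsilon < \epsilon_0(1-\kappa)$ yields the first inequality; letting $h \uparrow \epsilon_0$ yields the second. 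The threshold $\kappa < 1/2$ reflects the balance $\kappa(\theta) = 1 - \kappa(\theta) = 1/2$ in the three-lines combination, essentially sharp as already noted.

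\emph{Main obstacle.} The delicate step is controlling the bad-set contribution from $E_\theta^c$: Chebyshev alone gives only $|E_\theta^c| \leq 1/2$, insufficient for the Poisson integral to be dominated by the Kotani bound at an arbitrary base point. To shrink $|E_\theta^c|$ to $o(1)$ and to spread it out so that its Poisson weight is controlled uniformly in $x_0$, one iterates the Kotani argument through equidistributed translates $\{x + k\alpha\}_{k \leq N}$ at combinatorial scales $N \sim q_n$ (the Brownian motion / orbit averaging argument outlined by the author). The hypothesis $\beta = 0$ is essential both for the equidistribution of the orbit at these scales and for the continuity estimate in Lemma \ref{bjcor} that powers the a priori bound.
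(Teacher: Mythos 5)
Your overall blueprint (a priori bound in the strip, Kotani bound on $\R/\Z$ from Lemma~\ref{field}, combine by harmonic measure/subharmonicity) does match the paper's, but the quantitative claims in the first and third steps are wrong in ways that matter.

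\emph{The a priori bound.} You assert that Theorem~\ref{harmo} together with Lemma~\ref{bjcor} and $L_\theta \geq 2\pi\theta$ yields a persistence radius $\delta = \theta^{1+o(1)}$, hence $\rho_\theta := -\ln d(u_\theta,s_\theta) \leq (1+o(1))\ln(1/\theta)$ throughout the strip. This is not what the paper's argument gives, and the cited tools do not produce it. To conclude positivity of $L$ for a perturbation $\tilde A$ via Lemma~\ref{bjcor} one must compare $L_{q^M}(\tilde A)$ with $L_{q^M}(R_{-i\theta}A)$ for a continued-fraction denominator $q$ large enough that $q^{-1/M} \lesssim L_\theta$; the error in $\tilde A_{q^M}$ relative to $(R_{-i\theta}A)_{q^M}$ grows with the iterate, which forces the persistence radius down to roughly $e^{-q^M/2}$, not $\theta^{1+o(1)}$. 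This is exactly Lemma~\ref{start} in the paper, which gives only $\rho_\theta < q^M$ — a bound that is at best polynomial and in the general $\beta=0$ case can be superpolynomial in $\theta^{-1}$, far from logarithmic.

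\emph{The role of the Brownian motion argument.} You locate the difficulty in shrinking the bad set $E_\theta^c$ on $\R/\Z$, but Chebyshev (your own computation) already gives $|E_\theta^c| = O(\theta^\sigma) = o(1)$; that is not where the work is. The Brownian motion/orbit-averaging lemma in the paper is used for something else and stronger: starting from the crude a priori bound of Lemma~\ref{start}, it bootstraps iteratively (Lemma~\ref{brown1}) down to $\rho_\theta \leq -\ln L_\theta + o(\ln(1/\theta)) = (\kappa(\theta)+o(1))\ln(1/\theta)$ on the whole strip. The Kotani input here is not Lemma~\ref{u} but a cruder one: at the point of each horizontal line where $d$ is maximal, $d \geq cL_\theta$; this good point is then propagated along the orbit $\{k\alpha\}_{k<q}$, and the exponentially small harmonic measure of escaping this equidistributed obstacle kills the bad a priori term, provided $\beta=0$ allows a favorable choice of $q$.

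\emph{The interpolation.} Even granting a $(1+o(1))\ln(1/\theta)$ bound on the outer edge, your three-lines estimate
$\varphi_\theta(z_0) \leq [\,1 - (1-h/\epsilon_0)\kappa(\theta) + o(1)\,]\ln(1/\theta)$
cannot prove either inequality of the theorem. For the second inequality, letting $h \uparrow \epsilon_0$ in your formula gives $(1+o(1))\ln(1/\theta)$, whereas the claimed bound $\rho_\theta \leq (\kappa(\theta)+o(1))\ln(1/\theta)$ is strictly stronger (since $\kappa(\theta)\leq 1$) — the conclusion is stronger than your assumed boundary data, so no harmonic-measure argument can get there. For the first inequality, when $\kappa(\theta)$ is small your right-hand side tends to $\ln(1/\theta)$, not $(1-\kappa)\ln(1/\theta)$, and the estimate fails. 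The paper avoids both problems because its endpoints are $(\kappa(\theta)+o(1))\ln(1/\theta)$ (interior, from Lemma~\ref{brown1}) and $(1-\kappa(\theta)+o(1))\ln(1/\theta)$ (real line, from Lemma~\ref{u}): when $\kappa(\theta) < 1/2$ the interior bound alone suffices, and when $\kappa(\theta) \geq 1/2$ interpolating $\kappa(\theta)$ against $1-\kappa(\theta)$ gives a bound $\leq \frac{\epsilon}{\epsilon'}(1+o(1))\ln(1/\theta)$, which beats $(1-\kappa)\ln(1/\theta)$ under the hypothesis $\kappa < 1 - \epsilon/\epsilon_0$.
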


Let $\rho_\theta(z)=-\ln d(u_\theta(z),s_\theta(z))$.
Let $M$ be as in Lemma \ref {bjcor}.

\begin{lemma} \label {start}

If $\beta=0$, for every $0<\epsilon<\epsilon_0$, for every
$\theta>0$ sufficiently small, if $q=q_n$ is such that
$\theta>q^{-1/M}$ then $\rho_\theta(z)<q^M$ through $|\Im
z|<\epsilon$.

\end{lemma}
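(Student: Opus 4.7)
The plan is to argue by contradiction, combining Theorem \ref{harmo} with the quantitative continuity of the Lyapunov exponent embedded in Lemma \ref{bjestimate}. Suppose $\rho_\theta(z_0) \geq q^M$ at some $z_0$ with $|\Im z_0| < \epsilon$, fix $\eta \in (0, \epsilon_0 - \epsilon)$, and set $\Xi = \|R_{-i\theta}A(\cdot + i\Im z_0)\|_{\epsilon_0 - \epsilon}$, which is bounded uniformly in $\Im z_0 \in (-\epsilon, \epsilon)$ and in small $\theta$. Applying the contrapositive of Theorem \ref{harmo} to $R_{-i\theta} A(\cdot + i \Im z_0) \in C^\omega_{\epsilon_0 - \epsilon}(\R/\Z, \SL(2,\C))$ with $\eta/2, \eta/4$ in the roles of $\epsilon_1, \epsilon_2$, the assumption $d(u_\theta(z_0), s_\theta(z_0)) \leq e^{-q^M}$ produces, for some sign, a cocycle $\tilde A$ with $\|\tilde A - R_{-i\theta} A(\cdot + i(\Im z_0 \pm \eta/2))\|_{\eta/4} < C e^{-q^M}$ that is not uniformly hyperbolic.

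Next I identify the failure mode. Since $R_{-i\theta} A(\cdot + is)$ is regular for all $|s| \leq \epsilon + \eta/2 < \epsilon_0$, stability of regularity (from \cite{A1}) forces $\tilde A$ to also be regular; the equivalence of ``regularity with positive Lyapunov exponent'' and uniform hyperbolicity (also from \cite{A1}) then forces $L(\tilde A) = 0$. I aim to contradict this by using $L_\theta := L(R_{-i\theta}A(\cdot + is)) \geq 2\pi\theta > 2\pi q^{-1/M}$, which is a definite quantity by hypothesis.

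The contradiction comes from Lemma \ref{bjestimate} applied at the intermediate scale $N = q^3$ with $\kappa = q^{-1/M}/100$. The hypotheses $L_N > 100\kappa$ and $L_{2N} > \tfrac{9}{10}L_N$ hold for $R_{-i\theta}A(\cdot + is)$ since $L_n \geq L_\theta \gg 100\kappa$ and, for UH cocycles, $L_n$ converges to $L_\theta$ fast enough that the gap $L_N - L_{2N} = o(L_\theta)$ as soon as $N \gg L_\theta^{-1}$; the same hypotheses transfer to $\tilde A$ via the Lipschitz bound $|L_n(\tilde A) - L_n(R_{-i\theta}A(\cdot + is))| \leq \Xi^{n-1} \cdot Ce^{-q^M}$, which at $n = 2N$ equals $O(\Xi^{2q^3}e^{-q^M})$ and is negligible once $M$ is large. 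Letting $N' \to \infty$ in Lemma \ref{bjestimate} yields
\[
L_\theta = 2L_{2N}(R_{-i\theta}A(\cdot+is)) - L_N(R_{-i\theta}A(\cdot+is)) + O(e^{-c\kappa q})
\]
together with the analogous identity for $\tilde A$ where $L_\theta$ is replaced by $L(\tilde A) = 0$; subtracting and applying the Lipschitz bound gives $L_\theta = O(\Xi^{2q^3}e^{-q^M}) + O(e^{-c q^{1 - 1/M}})$, which is $o(q^{-1/M})$ for large $q$, contradicting $L_\theta > 2\pi q^{-1/M}$. The main obstacle is this balancing act in choosing $N$: the BJ condition $N > C\kappa^{-2}q$ forces $N$ to grow at least polynomially in $q$, while Lipschitz continuity of finite-time Lyapunov exponents forces $2N \log\Xi$ to stay well below $q^M$; the choice $N = q^3$ threads this needle provided $M$ is taken sufficiently large, which is permitted since any enlargement of $M$ preserves Lemma \ref{bjcor}. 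Finally, uniformity of the constants in Theorem \ref{harmo} over $\Im z_0 \in (-\epsilon, \epsilon)$ gives the estimate on the entire strip.
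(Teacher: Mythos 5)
Your overall plan is sound — combine Theorem~\ref{harmo} with the Bourgain--Jitomirskaya continuity estimate — and this is indeed what the paper does. But you bypass the packaged Lemma~\ref{bjcor} and instead try to run Lemma~\ref{bjestimate} directly at the fixed scale $N=q^3$, and this introduces two genuine gaps that the paper's route is specifically designed to avoid.

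First, you cannot simply ``let $N'\to\infty$'' in Lemma~\ref{bjestimate}: the conclusion is only valid for $N'/N < e^{e^{c\kappa q}}$, so a \emph{single} application of that lemma never reaches $L = \lim L_{N'}$. Getting from $L_{N'}$ at a finite $N'$ to the asymptotic $L$ requires iterating the estimate, promoting $N'$ to the new $N$ and moving to the next good denominator $q'$; that iteration is precisely the content (and the point) of Lemma~\ref{bjcor}. Without it, your ``analogous identity for $\tilde A$ where $L_\theta$ is replaced by $L(\tilde A)=0$'' is unsupported, since $L(\tilde A)=0$ is a statement about the limit.

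Second, and more seriously, the hypotheses of Lemma~\ref{bjestimate} at $N=q^3$ are not verified. You assert that for a uniformly hyperbolic cocycle $L_N - L_{2N} = o(L_\theta)$ as soon as $N \gg L_\theta^{-1}$. This is false: the convergence rate of $L_N$ to $L$ for a UH cocycle is governed by the size of the conjugacy to diagonal form, roughly $|L_N - L| \lesssim \frac{1}{N}\ln\|B\|^2$, and in your contradiction scenario the angle at $z_0$ is $\leq e^{-q^M}$, so $\ln\|B\|^2 \gtrsim q^M$. One would then need $N \gtrsim q^{M+1/M}$ to guarantee $L_N - L_{2N} \leq \tfrac{1}{10} L_N$, and at such $N$ the crude Lipschitz bound $\Xi^{2N}e^{-q^M}$ is useless. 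So the ``balancing act'' you acknowledge in your last paragraph cannot actually be threaded with the naive comparison of finite-scale Lyapunov exponents. The paper's Lemma~\ref{bjcor} sidesteps this by \emph{choosing} a good dyadic scale $N\in(q^5,q^M)$ where the hypotheses hold (a pigeonhole on the decreasing sequence $L_{2^k}$), rather than fixing $N$ a priori; and, crucially, the paper replaces your Lipschitz comparison by the telescoping estimate from \cite{AK1}, $\|\tilde A_{q^M}^{-1}(R_{-i\theta}A)_{q^M}-\id\|=o(1)$, which works at the large scale $q^M$ precisely because $(\alpha,R_{-i\theta}A)$ has subexponential growth (this uses $L(\alpha,A(\cdot+ti))=0$ and small $\theta$, an ingredient absent from your argument). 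With that in hand the paper verifies $L_{q^M}(\tilde A)>q^{-1/M}$ for \emph{every} nearby $\tilde A$, invokes Lemma~\ref{bjcor} in contrapositive to get $L(\tilde A)>0$, and then applies Theorem~\ref{harmo} constructively rather than by contradiction.
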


\begin{proof}

Note that $\frac {1} {q} \ln \|(R_{-i \theta} A)_q\|$ is small if $\theta$ is
small.
If $\tilde A$ is $e^{-q^M/2}$ close to $R_{-i \theta} A$ at $\{\Im z=t\}$,
then as in \cite {AK1},
\be
\|\tilde A_{q^M}^{-1} (R_{-i \theta} A)_{q^M}-\id\|=o(1), \quad \Im z=t,
\ee
uniformly on $|t|<\epsilon'$ for any given $\epsilon'<\epsilon_0$.
This gives
\begin{align}
\frac {1} {q^M} \int \ln \|\tilde A_{q^M}(x+t i)\| dx &\geq
\frac {1} {q^M}
\int \ln \|(R_{-i \theta} A)_{q^M}(x+it)\| dx-o(q^{-M})\\
\nonumber
&\geq
L_\theta-o(\theta) \geq 2 \pi \theta-o(\theta)>q^{-1/M}.
\end{align}
By Lemma \ref {bjcor}, if $\tilde A$ has a well behaved analytic extension
around $\{\Im z=t\}$, then
the Lyapunov exponent for $(\alpha,\tilde A(x+t i))$ is positive.  By
stability of regularity \cite {A1},
$(\alpha,\tilde A(x+t i))$ is regular as well, so it
is uniformly hyperbolic.  By Theorem \ref {harmo}, this gives the estimate
$\rho_\theta<\frac {q^M} {2}+C$, and the result follows.
\end{proof}

\begin{lemma}[Brownian motion argument]

Assume that $\rho_\theta$ is defined and bounded
through $|\Im z|<\epsilon$.  Then for
$0<\epsilon'<\epsilon$ and each $q$ we have
\be
\sup_{|\Im z|<\epsilon'} \rho_\theta \leq
C+e^{-C_0 (\epsilon-\epsilon') q} \sup_{|\Im z|<\epsilon}
\rho_\theta-
\ln L_\theta+\sup_{0 \leq n \leq q-1} 2 \ln \|(R_{-i \theta} A)_n\|_\epsilon,
\ee
where $C_0$ is an absolute constant and $C$ depends only on $\sup_{|\Im
z|<\epsilon} \|R_{-i \theta} A(z)\|$.

\end{lemma}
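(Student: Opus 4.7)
I would combine three ingredients: a Kotani-type \emph{a priori} bound on $\rho_\theta$ on the real axis coming from Lemma \ref{field}; a cocycle comparison estimate that transports such a bound across horizontal translates at the cost of $2\ln\|B_n\|_\epsilon$; and a subharmonic Phragm\'en--Lindel\"of / Fourier estimate on the strip which, together with equidistribution of $\{n\alpha\}_{n=0}^{q-1}$, damps the crude strip bound $M:=\sup_{|\Im z|<\epsilon}\rho_\theta$ by $e^{-C_0(\epsilon-\epsilon')q}$.

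For the real-axis bound, Lemma \ref{field} gives $|\dot u_\theta(x)|<e^{-4\pi\theta}$ on $\R$, and since $A$ is real $s_\theta(x)=\overline{u_\theta(x)}$ on $\R$; a direct computation in the disk model (writing $u=i(1+\dot u)/(1-\dot u)$) yields $d(u_\theta(x),s_\theta(x))=(1-|\dot u_\theta|^2)/(1+|\dot u_\theta|^2)\geq \tanh(4\pi\theta)$, so that combined with $L_\theta\geq 2\pi\theta$ one has $\rho_\theta(x)\leq -\ln L_\theta+O(1)$ for $x\in\R$. For the cocycle comparison, $B_n(z)u_\theta(z)=\lambda_u^{(n)}(z)u_\theta(z+n\alpha)$ (and similarly for $s$) together with $\det B_n(z)=1$ give the identity $\rho_\theta(z+n\alpha)-\rho_\theta(z)=\log|\lambda_u^{(n)}(z)\lambda_s^{(n)}(z)|$, and the two-sided bound $|\rho_\theta(z+n\alpha)-\rho_\theta(z)|\leq 2\ln\|B_n\|_\epsilon$ follows from $\|B_n(z)\|^{-1}\leq |\lambda_{u,s}^{(n)}(z)|\leq \|B_n(z)\|$.

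For $z_0$ with $|\Im z_0|<\epsilon'$, averaging the cocycle comparison over $n=0,\dots,q-1$ gives
\[
\rho_\theta(z_0)\leq G(z_0)+2\sup_{0\leq n<q}\ln\|B_n\|_\epsilon, \qquad G(z):=\frac{1}{q}\sum_{n=0}^{q-1}\rho_\theta(z+n\alpha).
\]
Since $\rho_\theta$ is subharmonic and $1$-periodic in $\Re z$, expand in Fourier: $\rho_\theta(x+iy)=\sum_k c_k(y)e^{2\pi ikx}$ with $|c_k(y)|\leq CMe^{-2\pi|k|(\epsilon-|y|)}$ (the standard Poisson estimate for subharmonic functions on a strip). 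The translate-average $G$ multiplies the $k$-th mode by the weight $w_k:=\frac{1}{q}\sum_{n=0}^{q-1}e^{2\pi ikn\alpha}$, which by equidistribution is close to $\delta_{q\mid k}$; thus $G(z_0)$ equals $c_0(\Im z_0)$ plus a contribution from modes of index $|k|\geq q$ bounded by $CMe^{-2\pi q(\epsilon-\epsilon')}$. Finally $c_0(\Im z_0)=\int_0^1\rho_\theta(x+i\Im z_0)dx$ is bounded by $-\ln L_\theta+O(1)$: this is the real-axis bound at $y=0$, extended to a neighborhood using the convexity of $c_0$ in $y$ \emph{together with} the regularity of the Lyapunov exponent $t\mapsto L_t$ (constant on $|t|<\epsilon$ by the standing hypothesis of Section \ref{angle}), which via the formula of Lemma 9 of \cite{A1} pins $c_0'(0)=0$ and forces $c_0$ to stay close to its real-axis value throughout $|y|<\epsilon'$. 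Assembling the three ingredients gives the stated inequality.

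The main obstacle is the step controlling $c_0(y)$ uniformly on $|y|<\epsilon'$ by $-\ln L_\theta+O(1)$: naive convexity only yields a linear interpolation between $c_0(0)\leq -\ln L_\theta+O(1)$ and $c_0(\pm\epsilon)\leq M$, which would lose a factor proportional to $|y|/\epsilon$ on $M$ rather than the required $e^{-C_0(\epsilon-\epsilon')q}$. Using the regularity-induced constancy of $L_t$ to obtain effective bounds on $c_0'(0)$ (or, alternatively, absorbing the interpolation loss into the $2\sup_{0\leq n<q}\ln\|B_n\|_\epsilon$ term via a more refined cocycle comparison on thinner sub-strips) is the key technical point where care is needed.
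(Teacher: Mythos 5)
Your proposal takes a genuinely different route (pointwise Kotani bound at $\Im z=0$, translate-averaging, Fourier damping) from the paper's argument (Kotani bound at the \emph{minimizer} of $\rho_\theta$ on \emph{every} horizontal line, then harmonic measure). You correctly flag that the weak link is propagating the real-axis bound on $c_0$ to $|y|<\epsilon'$ with only an exponentially small loss, and indeed that gap is fatal: convexity of $c_0$ gives only a linear interpolation between $c_0(0)$ and $c_0(\pm\epsilon)\leq M$, losing a factor $\epsilon'/\epsilon$ of $M$ rather than $e^{-C_0(\epsilon-\epsilon')q}$, and $c_0'(0)=0$ (even granting it) does not improve a convex chord bound. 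Nor does the regularity of $L$ imply control of $c_0$: the derivative formula in Lemma 9 of \cite{A1} concerns variations of $L$ with respect to the cocycle, not the $y$-derivative of the circle-average of $-\ln d$.

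There are two further problems. First, the real-axis step has the inequality running the wrong way: from $d(u_\theta,s_\theta)\geq\tanh(4\pi\theta)$ you get $\rho_\theta(x)\leq -\ln\theta+O(1)$, but $L_\theta\geq 2\pi\theta$ gives $-\ln L_\theta\leq -\ln\theta+O(1)$, so $-\ln\theta+O(1)$ is an \emph{upper} bound for $-\ln L_\theta+O(1)$ and your bound does not imply the stated one (recall $L_\theta=\theta^{\kappa(\theta)}$ with $\kappa(\theta)$ close to $1/2$, so $-\ln L_\theta$ is only about half of $-\ln\theta$). Second, the ``Poisson estimate'' $|c_k(y)|\leq CM e^{-2\pi|k|(\epsilon-|y|)}$ is a property of bounded \emph{harmonic} functions on a strip, not of bounded subharmonic ones: the Riesz potential part of $\rho_\theta$ has no reason to have exponentially decaying Fourier coefficients.

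The paper sidesteps all three issues by a different mechanism. Since $L(\alpha,R_{-i\theta}A(x+ti))=L_\theta$ is constant in $t$ (regularity), for every height $t$ one has the identity
\[
2L_\theta=\int_{\Im z=t}\ln\frac{\|R_{-i\theta}A(z)\cdot u_\theta(z)\|}{\|R_{-i\theta}A(z)\cdot s_\theta(z)\|}\,dx
\leq C'\int_{\Im z=t} d(u_\theta,s_\theta)\,dx,
\]
so at each height the point $z\in K$ maximizing $d$ satisfies $d(z)\geq 2L_\theta/C'$, i.e.\ $\rho_\theta(z)\leq -\ln L_\theta+O(1)$ — this is where $-\ln L_\theta$ (and not $-\ln\theta$) enters, and it is available at \emph{every} height, not only at $y=0$. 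Translating $K$ by $\{k\alpha\}_{0\leq k<q}$ (and paying $2\ln\|(R_{-i\theta}A)_k\|$ via the cocycle comparison, as you observe) produces a set $\tilde K$ meeting every $2q^{-1}$-interval on every horizontal line; a Brownian path from $z_0$ therefore has probability $\leq e^{-C_0(\epsilon-\epsilon')q}$ of reaching $|\Im z|=\epsilon$ before hitting $\tilde K$, and subharmonicity of $\rho_\theta$ finishes. This argument uses only subharmonicity (no Fourier decay), and the exponential damping comes from harmonic measure rather than from Fourier weights $w_k$.
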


\begin{proof}

The function $\rho_\theta(x)$ is bounded and subharmonic on $|\Im
x|<\epsilon$.  Let $K \subset \{|\Im z|<\epsilon\}$ be the set of all $z$
such that $\rho_\theta(z)=\min_{\Im w=\Im z} \rho_\theta(w)$.

Note that for each $|t|<\epsilon$, $2 L_\theta$ is the integral over $|\Im
z|=t$ of
$\ln \frac {\|R_{-i \theta} A(z) \cdot u_\theta(z)\|} {\|R_{-i \theta}
A(z) \cdot s_\theta(z)\|}$,
where we abuse notation denoting $u_\theta$ and $s_\theta$ unit vectors in
the corresponding directions.  But
$\frac {\|R_{-i \theta} A \cdot u_\theta\|} {\|R_{-i \theta}
A \cdot s_\theta\|} \leq 1+d(u_\theta,s_\theta)
\frac {\|R_{-i \theta} A\|} {\|R_{-i \theta} A
\cdot s_\theta\|}$.  It follows that
\be
2 L_\theta \leq C' \int d(u_\theta(z+x),s_\theta(z+x)) dx.
\ee
In particular, for $z \in K$ we have $d(u_\theta(z),s_\theta(z)) \geq 2
L_\theta/C'$, which gives
$\rho_\theta(z) \leq -\ln L_\theta+\ln \frac {C'} {2}$.   

Notice that since $(R_{-i \theta} A)_k(z)$ takes $(u_\theta(z),s_\theta(z))$
to $(u_\theta(z+k \alpha),s_\theta(z+k \alpha))$, the corresponding angles 
can only differ by a factor of $\|(R_{-i \theta} A)_k(z)\|^2$ (up to
absolute constant).  Thus for
$z \in \tilde K=\bigcup_{0 \leq k \leq
q-1} K+k \alpha$ we have the estimate
\be
\rho_\theta(z) \leq -\ln L_\theta+\sup_{0 \leq n \leq q-1}
2 \ln \|(R_{-i \theta} A)_n\|_\epsilon+C.
\ee
   
Let us now consider a point $z_0$ with $|\Im z_0|<\epsilon'$,
and let us run the
Brownian motion starting at $z_0$
up to the moment it either touches $\tilde K$ or $\{|\Im
z|=\epsilon\}$.  Since $\rho_\theta$ is subharmonic,
$\rho_\theta(z_0)$ is bounded by the
expectation of its value at the endpoint of the Brownian motion.  Thus
\be
\rho_\theta(z_0) \leq p \sup_{|\Im z|<\epsilon} \rho_\theta(z)+(1-p)
\sup_{z \in \tilde K} \rho_\theta(z),
\ee
where $p$ is the probability that the
Brownian motion reaches $|\Im z|=\epsilon$
without hitting $\tilde K$.

Thus it is enough to show that $p<e^{-C_0 (\epsilon-\epsilon') q}$.  In
order to see this, it is enough to show that for some absolute
constant $C_2>0$, the probability that the Brownian motion started at a
point $z$ with $|\Im z|<\epsilon-2 q^{-1}$ reaches the boundary of the 
square $Q(z)$ centered on $z$ and of side $4 q^{-1}$
without touching $\tilde K$ is
at most $e^{-C_2}$.  But since $q$ is an approximant of $\alpha$, $\{k
\alpha\}_{0 \leq k \leq q-1}$ is $2 q^{-1}$ dense on the circle.
So $\tilde K \cap Q$ intersects each horizontal segment
$J_s=\{z+t+i s\}_{t \in [-q^{-1},q^{-1}]}$ for $s \in [-2 q^{-1},2 q^{-1}]$.
Such a set is clearly a definite obstacle for the Brownian motion (after
rescaling by $q$, it has definite logarithmic capacity) and is hit
with positive probability.
\end{proof}

\begin{lemma} \label {brown1}

If $\beta=0$, then $\rho_\theta \leq -\ln L_\theta+o(-\ln \theta)$
over $|\Im z|<\epsilon$.

\end{lemma}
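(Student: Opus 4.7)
The plan is to start from the a priori bound of Lemma \ref{start} and progressively sharpen it through iterated applications of the Brownian motion lemma, each iteration exploiting $\beta=0$ to produce a denominator of $\alpha$ of an appropriate size.

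First, fix $\epsilon'\in(\epsilon,\epsilon_0)$. By Lemma \ref{start}, for $\theta$ sufficiently small and $q_0=q_n$ the smallest denominator of $\alpha$ with $q_n>\theta^{-M}$, we have $R_0:=\sup_{|\Im z|<\epsilon'}\rho_\theta<q_0^M$. The hypothesis $\beta=0$ guarantees that $\ln q_0=o(q_{n-1})=o(\theta^{-M})$, so $\ln R_0=o(\theta^{-M})$. This is a crude but usable starting point.

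Next, apply the Brownian motion lemma iteratively along a sequence of nested strips $\epsilon'>\epsilon^{(1)}>\epsilon^{(2)}>\cdots>\epsilon$. At step $k$, given the current bound $R_k$ on $|\Im z|<\epsilon^{(k)}$, select a denominator $q_k=q_{n_k}$ satisfying $q_k\geq\ln R_k/(C_0(\epsilon^{(k)}-\epsilon^{(k+1)}))$, taking the smallest such. By $\beta=0$, such a denominator has $\ln q_k=o(\ln R_k)$, so $q_k=R_k^{o(1)}$. Using the trivial bound $\Phi(q_k)\leq 2q_k\ln\|R_{-i\theta}A\|_{\epsilon'}=O(q_k)$ and the fact that $e^{-C_0(\epsilon^{(k)}-\epsilon^{(k+1)})q_k}R_k\leq 1$ by choice of $q_k$, the Brownian motion lemma gives
\[
R_{k+1}\leq 1+\bigl(-\ln L_\theta+O(R_k^{o(1)})+C\bigr)=O(\ln(1/\theta))+O(R_k^{o(1)}).
\]
Since $R_k^{o(1)}$ shrinks rapidly through iterations, after a finite number of steps (possibly depending on $\alpha$ and $\theta$) the running bound drops to $R_\ast=O(\ln(1/\theta))$.

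Finally, once $R_\ast=O(\ln(1/\theta))$, so $\ln R_\ast=O(\ln\ln(1/\theta))$, pick $q_\ast$ to be the smallest denominator of $\alpha$ slightly above $\ln R_\ast/(C_0(\epsilon^{(\ast)}-\epsilon))$. By $\beta=0$, $q_\ast\leq e^{o(\ln\ln(1/\theta))}=(\ln(1/\theta))^{o(1)}=o(\ln(1/\theta))$, so $\Phi(q_\ast)=O(q_\ast)=o(-\ln\theta)$ and $e^{-C_0(\epsilon^{(\ast)}-\epsilon)q_\ast}R_\ast=o(1)$. One last application of the Brownian motion lemma yields $\sup_{|\Im z|<\epsilon}\rho_\theta\leq-\ln L_\theta+o(-\ln\theta)$, as desired. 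The main technical obstacle is verifying that the iterative reduction reaches the regime $R_\ast=O(\ln(1/\theta))$ in finitely many steps: because $\beta=0$ provides only subexponential (not polynomial) control on the gap between consecutive denominators, the contraction rate $R_k\mapsto R_k^{o(1)}$ depends on $\theta$. Since the conclusion is purely qualitative (an $o$-term), allowing the iteration count to depend on $\theta$ suffices.
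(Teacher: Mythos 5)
Your general strategy---iterate the Brownian-motion argument, at each stage choosing the smallest denominator of $\alpha$ past the threshold needed to absorb the running supremum bound, using $\beta=0$---is exactly the paper's. But the termination step as you have written it has a real gap. You allow the number of iterations to depend on $\theta$ and dismiss this because the conclusion is qualitative; that does not work here. Each application of the Brownian-motion lemma passes from a strip $|\Im z|<\epsilon^{(k)}$ to a strictly smaller one, and all the nested strips must fit inside the fixed interval $(\epsilon,\epsilon')$. If the iteration count $K(\theta)$ is unbounded as $\theta\to 0$, some gap $\epsilon^{(k)}-\epsilon^{(k+1)}$ tends to $0$, which drives up the threshold for $q_k$ and hence the additive error $\Phi(q_k)=O(q_k)$; in particular the final bound $q_\ast=o(-\ln\theta)$ that you need presupposes a gap $\epsilon^{(\ast)}-\epsilon$ bounded away from $0$, which you have not arranged. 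And nothing in the recursion $R_{k+1}=O(\ln(1/\theta))+R_k^{o(1)}$ bounds $K(\theta)$: the $o(1)$ exponent comes from $\beta=0$, a property of $\alpha$ alone whose decay rate is not under your control.

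The paper closes this gap with a concrete nested-minimality observation that forces termination after at most three Brownian-motion steps, with three fixed gaps. Taking $q_{(0)}$ least with $\theta>q_{(0)}^{-1/2M}$ forces every denominator $q<q_{(0)}$---in particular $q_{(1)}$---to satisfy $q\le\theta^{-2M}$, hence $\ln q_{(1)}=O(-\ln\theta)$. Then, choosing $q_{(j+1)}$ minimal in $[R\ln q_{(j)},q_{(j)})$ at each stage, the nesting gives $q_{(3)}<R\ln q_{(1)}$: if not, $q_{(3)}$ would be a denominator lying in $[R\ln q_{(1)},q_{(1)})$ and strictly less than $q_{(2)}$, contradicting the minimality of $q_{(2)}$. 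Hence $q_{(3)}=O(-\ln\theta)$ after three steps, and, using that the prefactor in the matrix-growth term is $c_\theta\to 0$ (which your cruder bound $\Phi(q)=O(q)$ would only give after one further iteration), $\rho_\theta\le-\ln L_\theta+c''_\theta q_{(3)}=-\ln L_\theta+o(-\ln\theta)$. This minimality argument, not an abstract bound on the iteration count, is the missing ingredient in your proposal; without it the reduction is not known to reach $R_\ast=O(\ln(1/\theta))$ before the strips collapse.
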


\begin{proof}

Fix $R>0$ large.  Below the $q_{(j)}$ are specific choices of
continued fraction denominators.

Let $q_{(0)}$ be least with $\theta>q_{(0)}^{-1/2 M}$.
By Lemma \ref {start},
$\rho_\theta \leq q_{(0)}^M$ through a band bigger than $\epsilon$.
Take $q_{(1)}$ minimal with $R \ln q_{(0)} \leq q_{(1)}<q_{(0)}$
(use $\beta=0$).
Apply the Brownian motion argument with $q_{(1)}$.
We get an improved estimate
\be
\rho_\theta \leq e^{-c q_{(1)}} q_{(0)}^M-\ln L_\theta+c_\theta
q_{(1)} \leq -\ln
L_\theta+c_\theta q_{(1)}+o(1)
\ee
where $c_\theta \to 0$ as $\theta \to 0$.
If $q_{(1)}=O(-\ln \theta)$, we are done.

Otherwise, repeat the procedure taking
$R \ln q_{(1)} \leq q_{(2)}<q_{(1)}$ minimal.  We get a new estimate
$\rho_\theta
\leq -\ln L_\theta+c'_\theta q_{(2)}$.  If $q_{(2)}=
O(-\ln \theta)$, we are done.

Otherwise, repeat the procedure taking $R \ln q_{(2)} \leq q_{(3)}<q_{(2)}$
minimal.
We get a new estimate
$\rho_\theta \leq -\ln L_\theta+c''_\theta q_{(3)}$.
Notice that $q_{(3)}<R \ln q_{(1)}$ and
$\theta \leq q_{(1)}^{-1/2 M}$ so $q_{(3)}=O(-\ln \theta)$, so we are done.
\end{proof}

So far we have seen that a smaller $\kappa(\theta)$ (so larger $L_\theta$)
yields better angle estimates. 
However a large $\kappa(\theta)$ turns out to help at $\Im z=0$:

\begin{lemma} \label {u}

For a set of probability $1-o(1)$ in
$\Im z=0$, $|\dot{u}_\theta(z)|<1-\theta^{1-\kappa(\theta)+o(1)}$.

\end{lemma}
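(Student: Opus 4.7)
The plan is to use the explicit Herman formula for $L_\theta$ from Lemma~\ref{field},
\[
L_\theta - 2\pi\theta \;=\; \tfrac{1}{2}\int_{\R/\Z} \ln\frac{1-|\dot{u}_\theta(x)|^2}{1-e^{8\pi\theta}|\dot{u}_\theta(x)|^2}\,dx,
\]
together with the pointwise constraint $|\dot{u}_\theta(x)|<e^{-4\pi\theta}$ from the same lemma. Writing $\delta(x)=1-|\dot{u}_\theta(x)|^2$ and $b=1-e^{-8\pi\theta}$, the denominator inside the logarithm factors as $e^{8\pi\theta}(\delta(x)-b)$, which is positive precisely because $\delta>b$. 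After a routine rearrangement the identity becomes
\[
\int_{\R/\Z}\ln\frac{\delta(x)}{\delta(x)-b}\,dx \;=\; 2L_\theta+4\pi\theta \;\leq\; 4L_\theta \;=\; 4\theta^{\kappa(\theta)},
\]
the middle inequality being $L_\theta\ge 2\pi\theta$.

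Fix $\eta>0$ and examine the exceptional set $E_\eta=\{x:|\dot{u}_\theta(x)|\ge 1-\theta^{1-\kappa(\theta)+\eta}\}$. If $\kappa(\theta)\leq\eta$, then $\theta^{1-\kappa(\theta)+\eta}\leq\theta<1-e^{-4\pi\theta}<1-|\dot{u}_\theta(x)|$ for $\theta$ small, so $E_\eta=\varnothing$ and there is nothing to prove. Otherwise $\kappa(\theta)>\eta$ and on $E_\eta$,
\[
\delta(x)\;\leq\;2(1-|\dot{u}_\theta(x)|)\;\leq\;2\theta^{1-\kappa(\theta)+\eta},
\]
which for $\theta$ small dwarfs $b\sim 8\pi\theta$. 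Since $\delta\mapsto\ln\frac{\delta}{\delta-b}$ is decreasing on $(b,\infty)$, the integrand on $E_\eta$ is bounded below by its value at the upper endpoint, which a direct Taylor expansion shows to be at least $c\,\theta^{\kappa(\theta)-\eta}$ for some absolute $c>0$. Plugging this into the global integral bound yields a Chebyshev-type inequality
\[
|E_\eta|\cdot c\,\theta^{\kappa(\theta)-\eta}\;\leq\;4\theta^{\kappa(\theta)},
\qquad\text{i.e.}\qquad |E_\eta|\;\leq\;\tfrac{4}{c}\,\theta^{\eta},
\]
with a constant independent of $\theta$ and $\kappa(\theta)$.

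To obtain the $o(1)$ in the exponent as stated in the lemma, it suffices to diagonalize the estimate, e.g.\ by taking $\eta(\theta)=(-\ln\theta)^{-1/2}\to 0$, for which $\theta^{\eta(\theta)}=e^{-\sqrt{-\ln\theta}}\to 0$, and hence $|E_{\eta(\theta)}|\to 0$. The whole argument is a clean Chebyshev-style estimate once the Herman formula is rewritten with $\delta$ and $b$; the point of substance is the monotonicity of $\delta\mapsto\ln\frac{\delta}{\delta-b}$, which forces the integrand to be large precisely where $|\dot{u}_\theta|$ is too close to $1$. I do not anticipate any essential obstacle beyond this explicit computation.
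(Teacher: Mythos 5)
Your proof is correct and fleshes out exactly what the paper's one-line proof (``Direct application of Lemma \ref{field}'') intends: rewrite Herman's formula from Lemma \ref{field} in terms of $\delta=1-|\dot u_\theta|^2$ and $b=1-e^{-8\pi\theta}$ and run a Chebyshev estimate using the monotonicity of $\delta\mapsto\ln\frac{\delta}{\delta-b}$. The diagonalization $\eta(\theta)=(-\ln\theta)^{-1/2}$ is a clean way to land on the stated $o(1)$ exponents, and all the estimates ($\delta>b$ from $|\dot u_\theta|<e^{-4\pi\theta}$, the rearranged identity $\int\ln\frac{\delta}{\delta-b}=2L_\theta+4\pi\theta\le 4\theta^{\kappa(\theta)}$, and the lower bound $c\,\theta^{\kappa(\theta)-\eta}$ on the integrand over $E_\eta$) check out.
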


\begin{proof}

Direct application of Lemma \ref {field}.
\end{proof}

\noindent {\it Proof of Theorem \ref {ang}.}
Fix $\epsilon<\epsilon'<\epsilon_0$ close to $\epsilon_0$.
By Lemma \ref {u}, the set of $x \in \R/\Z$ such that
$\rho_\theta(x)<(-1+\kappa(\theta)-o(1)) \ln \theta$
has probability at least
$1-o(1)$.  By Lemma \ref {brown1}, we also have
$\rho_\theta(x)<(-\kappa(\theta)-o(1)) \ln \theta$ over any $|\Im
x|<\epsilon'$,
which implies the second claim and also the first claim if
$\kappa(\theta)<1/2$.  Otherwise,
since $\rho_\theta$ is subharmonic, it follows that
\be
\sup_{|\Im x|<\epsilon} \rho_\theta(x) \leq -\ln \theta
\left (\frac {\epsilon} {\epsilon'} (\kappa(\theta)+o(1))+
\left (1-\frac {\epsilon}
{\epsilon'} \right ) (1-\kappa(\theta)+o(1)) \right ),
\ee
which implies the first claim in the case
$\kappa(\theta) \geq 1/2$.
\qed

\section{Choosing a third point} \label {mini}

We call $B \in \SL(2,\C)$ a minimizer (or minimizing) if
\be
\left \|B^{-1} \cdot \left ( \bm i \\ 1 \em \right ) \right \|=
\left \|B^{-1} \cdot \left ( \bm -i \\ 1 \em \right ) \right \|.
\ee
Notice that $B$ is minimizing
if and only if $B^{-1} \cdot \overline \R$
is the great circle $H_{B^{-1} \cdot i,B^{-1} \cdot -i}$ of points $z$
which are at the same distance from both $B^{-1} \cdot i$ and $B^{-1} \cdot
-i$.

Let $x,y \in \overline \C$ be two distinct points.  We let $K_{x,y} \subset
\SL(2,\C)$ be the set of all $B$ such that $B \cdot x=i$ and $B \cdot y=-i$.
We let $k(x,y)=\inf_{B \in K_{x,y}} \|B\|^2$.  It is easy to see
that the infimum
is attained, and this happens precisely
at the set of minimizing elements of $K_{x,y}$.

Note that $k(x,y)$ is a
decreasing function of $d(x,y)$ and so it satisfies the maximal principle.
Indeed, up to an $\SU(2)$ change of coordinates
we may assume that $x,y$ are such that $x=-y=\epsilon i$ with $0<\epsilon
\leq 1$.  Then $d(x,y)=\frac {2 \epsilon} {1+\epsilon^2}$
and a minimizing matrix
$B \in K_{x,y}$ is of the form $R_\lambda D_{\epsilon^{-1/2}}$, where
$\lambda \in \R$ and
$D_\mu$ is the diagonal matrix $\begin{pmatrix} \mu&0 \\0&
\mu^{-1} \end{pmatrix}$,
so that $k(x,y)=\epsilon^{-1}$.  In particular, $1 \leq
d(x,y) k(x,y) \leq 2$.

Notice that at a minimizing matrix $B \in K_{x,y}$ we have
\be
\left \|B^{-1} \cdot \left ( \bm \pm i \\ 1 \em \right )
\right \|^2=k(x,y)+k(x,y)^{-1}.
\ee

\begin{lemma} \label {minim}

Let $u,s:\D \to \overline \C$ be holomorphic functions such that
$k_\D(u,s)=\sup_{z
\in \D} k(u(z),s(z))<\infty$.  Then there exists $B:\D \to \SL(2,\C)$
holomorphic with $B(z) \in K_{u(z),s(z)}$,
$k_\D(u,s)=\|B\|^2_\D=\sup_{z \in \D} \|B(z)\|^2$, and such that
the non-tangential limits $B(z_0)$, $z_0 \in \partial \D$ are
minimizers.

\end{lemma}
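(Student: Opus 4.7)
My plan is to reduce the problem to choosing a single holomorphic scalar function, arrange it so that $B$ is a minimizer non-tangentially on the boundary, and then use plurisubharmonicity of $\|B\|^2$ to control the interior. First I lift $u, s$ to holomorphic non-vanishing vector-valued functions $V_u, V_s: \mathbb{D} \to \mathbb{C}^2 \setminus \{0\}$ (writing $u = p_1/q_1$, $s = p_2/q_2$ in lowest terms) and rescale so that $V_u \wedge V_s = 2i$, using a holomorphic square root on the simply connected disk. Using the residual freedom $(V_u, V_s) \mapsto (\mu V_u, \mu^{-1} V_s)$ for holomorphic non-vanishing $\mu: \mathbb{D} \to \mathbb{C}^*$, I arrange $\|V_u\|, \|V_s\| \leq \sqrt{2K}$ on $\mathbb{D}$, where $K := k_\mathbb{D}(u, s)$. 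This is a Perron-type barrier problem: $\ln|\mu|$ must be harmonic and sandwiched between the subharmonic function $\ln\|V_s\| - \tfrac12\ln(2K)$ and the superharmonic function $\tfrac12\ln(2K) - \ln\|V_u\|$, and barrier compatibility follows from $\|V_u\|\|V_s\| = 2/d(u, s) \leq 2K$. Together with the universal lower bound $\|V_u\|\|V_s\| \geq |V_u \wedge V_s| = 2$, this places both $\|V_u\|, \|V_s\|$ in $[\sqrt{2/K}, \sqrt{2K}]$.

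Any holomorphic section $B: \mathbb{D} \to \mathrm{SL}(2,\mathbb{C})$ with $B(z) \in K_{u(z),s(z)}$ then corresponds to a unique holomorphic non-vanishing $f : \mathbb{D} \to \mathbb{C}^*$ via $B^{-1}(i,1)^T = f V_u$ and $B^{-1}(-i,1)^T = f^{-1} V_s$; the parallelogram identity gives
$$\|B\|^2 + \|B\|^{-2} = \tfrac12 \bigl(|f|^2 \|V_u\|^2 + |f|^{-2}\|V_s\|^2 \bigr) \geq \|V_u\| \|V_s\| = 2/d(u, s),$$
with equality (the pointwise minimizer condition) iff $|f|^4 = \|V_s\|^2/\|V_u\|^2$. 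Under the normalization above, $\psi_0 := \tfrac14 \ln(\|V_s\|^2/\|V_u\|^2)$ is bounded on $\partial\mathbb{D}$ (with $|\psi_0| \leq \tfrac12\ln K$); I let $h$ be its Poisson extension, $\tilde h$ a harmonic conjugate, and set $f := e^{h + i\tilde h}$. Then $f$ is holomorphic and non-vanishing on $\mathbb{D}$ with $|f|^{\pm 2}$ uniformly bounded, and Fatou's theorem gives $|f|^4 = \|V_s\|^2/\|V_u\|^2$ non-tangentially a.e.\ on $\partial\mathbb{D}$, so the corresponding $B$ has non-tangentially minimizing boundary limits with $\|B\|^2 = k(u, s) \leq K$ a.e.

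The function $\|B(z)\|^2 = \sup_{\|v\|=1}\|B(z)v\|^2$ is plurisubharmonic on $\mathbb{D}$ as a supremum of squared norms of holomorphic vectors, and by the identity above combined with the uniform bounds on $|f|^{\pm 1}, \|V_u\|, \|V_s\|$, it is bounded throughout $\mathbb{D}$. The maximum principle for bounded subharmonic functions then gives $\|B\|^2 \leq K$ on $\mathbb{D}$; combined with the pointwise lower bound $\|B(z)\|^2 \geq k(u(z), s(z))$ and $\sup_z k = K$, this forces $\|B\|^2_\mathbb{D} = K = k_\mathbb{D}(u, s)$. The main obstacle is the normalization in the first step, where we need a single holomorphic scalar $\mu$ to simultaneously bound $\|V_u\|$ and $\|V_s\|$; the hypothesis $k_\mathbb{D}(u,s) < \infty$ enters essentially here through barrier compatibility, and without it the boundary data $\psi_0$ could fail to lie in $L^\infty(\partial \mathbb{D})$, breaking the Poisson extension step.
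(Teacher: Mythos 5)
Your proof is correct, and it does take a genuinely different route from the paper's.

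The paper's proof proceeds by dilation: it replaces $u,s$ with $u_t(z)=u(tz)$, $s_t(z)=s(tz)$ so that everything is defined and smooth on $\overline\D$, then uses the Hilbert transform to build a holomorphic diagonal correction $D_{e^{\nu_t/2}}$ making $B'_t$ minimizing on $\partial\D$, and finally passes to the limit $t\to1$ and uses Fatou on the boundary integral of $\|B'_t\|^2$. Your proof instead works directly on $\D$: you first lift $u,s$ to normalized vector fields $V_u,V_s$ with $V_u\wedge V_s=2i$, and — this is the new ingredient — use a Perron-type barrier argument to multiply by a holomorphic nonvanishing $\mu$ so that $\|V_u\|,\|V_s\|$ are a priori bounded by $\sqrt{2K}$ (the barrier compatibility condition $\|V_u\|\|V_s\|=2/d(u,s)\le 2K$ is exactly where $k_\D(u,s)<\infty$ enters). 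Once the boundary data $\psi_0=\tfrac14\ln(\|V_s\|^2/\|V_u\|^2)$ is in $L^\infty(\partial\D)$, the Poisson extension plus harmonic conjugate produces the scalar $f=e^{h+i\tilde h}$, and the Hilbert-Schmidt identity $\|B\|^2+\|B\|^{-2}=\tfrac12(|f|^2\|V_u\|^2+|f|^{-2}\|V_s\|^2)$, which is correct since $\tfrac1{\sqrt2}(i,1)^T$, $\tfrac1{\sqrt2}(-i,1)^T$ are orthonormal, gives the minimizing property a.e.\ on the boundary. The final maximum-principle step needs the fact that a bounded subharmonic $\varphi$ on $\D$ satisfies $\varphi\le P[\varphi^*]$ (Riesz decomposition plus Littlewood-type vanishing of the bounded Green potential a.e.\ nontangentially); you state this tersely, but it is standard, and one can also bypass it by writing $\|B\|^2+\|B\|^{-2}=\tfrac12(\|fV_u\|^2+\|f^{-1}V_s\|^2)$ as a sum of $|g|^2$ with $g\in H^\infty$ and using $|g|^2\le P[|g^*|^2]$, then monotonicity of $t\mapsto t+t^{-1}$. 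Both proofs share the underlying idea of a $\C^*$-valued holomorphic correction whose modulus is dictated by a boundary-value problem; yours trades the paper's dilation-and-compactness argument for a normalization lemma that makes the boundary data bounded from the start, which is more direct and somewhat cleaner.
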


\begin{proof}

Let $U \in \SU(2)$ be such that $U \cdot \infty=i$ and $U \cdot 0=-i$.
For $t<1$, let $u_t(z)=u(t z)$, $s_t(z)=s(t z)$.  Choose $B_t(z) \in
K_{u_t(z),s_t(z)}$ depending holomorphically on $z$ in a neighborhood of
$\overline \D$.\footnote {Write $u_t=a_t/c_t$ as a quotient of
holomorphic functions without common zeros, and similarly write
$s_t=b_t/d_t$.  Then one can define $B_t$ so that
$B_t^{-1} U=\frac {1} {(a_t d_t-b_t c_t)^{1/2}}
\left (\bm a_t & b_t\\c_t & d_t \em \right )$.}
Notice that $U^{-1} B_t(z) H_{u_t(z),s_t(z)}$ is a circle centered on $0$.
Let $r_t(z)$ be
its radius.  Using the Hilbert transform, let $\nu_t(z):\D \to \C$ be a
holomorphic function on $\D$, smooth up to the boundary, such that $\Re
\nu_t(z)=-\ln r_t(z)$ for $z \in \partial \D$.  Then $B'_t=
U D_{e^{\nu_t/2}} U^{-1} B_t$ is such that
$B'_t(z) \in K_{u_t(z),s_t(z)}$ for
$z \in \D$
and $B'_t(z)$ takes $H_{u_t(z),s_t(z)}$ to $\overline \R$ for
$z \in \partial \D$.  We conclude that
\be
\|B'_t(z)\|^2=k(u_t(z),s_t(z)) \quad \mathrm {for} \quad
z \in \partial \D,
\ee
so that $\sup_{z \in \partial \D} \|B'_t(z)\|^2 \leq k_\D(u,s)$.
Taking a limit of $B'_t$ as $t \to 1$, we get some $B:\D \to \SL(2,\C)$
holomorphic such that $\|B\|_\D^2 \leq k_\D(u,s)$.
The existence of the non-tangential limits of $B$ implies the existence
of the non-tangential limits of $u$ and $s$.  Then we get
\be
\int_0^1 \|B(e^{2 \pi i x})\|^2 dx \leq \limsup_{t \to 1}
\int_0^1 \|B'_t(e^{2 \pi i x})\|^2 dx=
\int_0^1
k(u(e^{2 \pi i x}),s(e^{2 \pi i x})) dx.
\ee
So the minimizing property $\|B(z)\|^2 \geq
k(u(z),s(z))$ gives $\|B(z)\|^2=k(u(z),s(z))$ for almost
every $z \in \partial \D$, that is, $B(z)$ is a minimizer for almost every
$z \in
\partial \D$.
\end{proof}

\begin{lemma}

Let $u$ and $s$ be as in Lemma \ref {minim}, and let $B,\tilde B:\D \to
\SL(2,\C)$ be holomorphic with $B(z),\tilde B(z) \in
K_{u(z),s(z)}$, $k_\D(u,s)=\|B\|^2_\D=\|\tilde B\|_\D^2$ and the
non-tangential limits of $B$ and $\tilde B$ are minimizers.  Then $\tilde
B=R_\lambda B$ for some $\lambda \in \R$.

\end{lemma}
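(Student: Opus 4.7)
\emph{Plan.} My strategy is to show that $\tilde B B^{-1}$ is a holomorphic family of unitary $\SL(2,\C)$ matrices, hence constant. The norm constraints $\|B\|_\D = \|\tilde B\|_\D = k_\D(u,s)^{1/2}$, combined with the identity $\|M^{-1}\| = \|M\|$ valid for every $M \in \SL(2,\C)$, make $\log\|\tilde B B^{-1}\|$ bounded and subharmonic on $\D$. At a.e.\ boundary point the non-tangential limits $B(z)$ and $\tilde B(z)$ are minimizers and therefore each sends $\overline \R$ to the great circle $H_{u(z),s(z)}$, so their ratio preserves $\overline \R$ in the M\"obius action; together with the pointwise stabilization of $\{i,-i\}$ that holds throughout $\D$ because $B(z), \tilde B(z) \in K_{u(z),s(z)}$, this forces $\tilde B(z)B(z)^{-1}$ to be a real M\"obius transformation fixing $i$ and $-i$, i.e.\ a rotation, and in particular of operator norm $1$ for a.e.\ $z \in \partial \D$.

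Applying the Poisson majorization for bounded subharmonic functions, plus the trivial bound $\|M\| \geq 1$ for every $M \in \SL(2,\C)$, one concludes $\|\tilde B B^{-1}\| \equiv 1$ on $\D$, so $\tilde B B^{-1}$ actually takes values in $\SU(2)$. A holomorphic map $\D \to \SU(2)$ is automatically constant: the defining relation $M^* = M^{-1}$ identifies the antiholomorphic $M^*$ with the holomorphic $M^{-1}$, which forces both sides to be simultaneously holomorphic and antiholomorphic, hence constant. The resulting constant must lie in the stabilizer of $\{i,-i\}$ inside $\SU(2)$, which is the one-parameter subgroup $\{R_\lambda : \lambda \in \R\}$ (the maximal torus corresponding to the axis through $\pm i$), and this yields $\tilde B = R_\lambda B$.

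The argument is essentially soft complex analysis; the only step requiring a bit of care is the boundary-to-interior upgrade $\|\tilde B B^{-1}\| \equiv 1$, where one must invoke the Poisson majorization for bounded subharmonic functions against their a.e.\ defined non-tangential boundary values. Everything else is automatic from the structure of $\SU(2)$ and the hypothesis that both $B$ and $\tilde B$ are minimizing on the boundary.
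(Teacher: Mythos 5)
Your proof is correct, but it follows a genuinely different route from the paper's. The paper exploits that $\tilde B(z)B(z)^{-1}$ fixes both $i$ and $-i$, so by simple connectedness of $\D$ one can write $\tilde B(z)=R_{\lambda(z)}B(z)$ for a \emph{holomorphic} lift $\lambda:\D\to\C$. Boundedness of $R_{\lambda(z)}$ forces $\Im\lambda$ to be a bounded \emph{harmonic} function; the minimizer hypothesis gives $\Im\lambda=0$ a.e.\ on $\partial\D$; and then the classical Fatou representation of bounded harmonic functions immediately yields $\Im\lambda\equiv 0$, so $\lambda$ is a real constant. You instead work with $g=\log\|\tilde B B^{-1}\|$, which is only \emph{subharmonic}, and you conclude $g\equiv 0$ from $g\ge 0$, boundedness, and a.e.\ zero boundary values; this does work, but it invokes the Littlewood-type statement (Riesz decomposition plus the fact that the Green potential of the Riesz mass has a.e.\ zero non-tangential limits) rather than plain Fatou for bounded harmonic functions. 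You also pass through the extra observation that a holomorphic map $\D\to\SU(2)$ is constant, which the paper avoids since a real holomorphic $\lambda$ is trivially constant. Both arguments correctly finish by identifying the constant inside the stabilizer of $\{i,-i\}$ with $\{R_\lambda:\lambda\in\R\}$. The upshot: your argument is valid and self-contained in spirit, but the paper's linearization $\tilde BB^{-1}=R_{\lambda(z)}$ replaces your subharmonic-Fatou step with the more elementary harmonic one, which is worth noting as the cleaner path.
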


\begin{proof}


Since $B(z),\tilde B(z) \in K_{u(z),s(z)}$, $\tilde B(z)=R_{\theta(z)} B(z)$
for some holomorphic function $\lambda:\D \to \C$.
Since $R_{\theta(z)}$ is bounded, $\Im \lambda(z)$ is bounded as well, so
the non-tangential limits $\lambda(z)$ exist for almost every $z \in
\partial \D$.
For $z \in \partial \D$ we must have $\lambda(z) \in \R$.  Since $\Im
\lambda$ is a bounded harmonic function in $\D$ which vanishes almost
everywhere in $\partial \D$, we conclude that $\Im \theta$ vanishes over
$\D$, so $\lambda(z)$ is a real constant.
\end{proof}

\begin{lemma} \label {cho}

Let $u,s:\R/\Z \to \overline \C$ have holomorphic extensions to $|\Im
z|<\delta$ with
$k_\delta(u,s)=\sup_{|\Im z|<\delta} k(u(z),s(z))<\infty$.
Then there exists $B \in C^\omega_\delta(\R/\Z,\SL(2,\C))$
such that $B(z) \in K_{u(z),s(z)}$ for $|\Im z|<\delta$ and
$\|B\|_\delta^2 \leq C_\delta k_\delta(u,s)$. 
Moreover, for almost every $z_0$ with $|\Im z_0|=\delta$ we have
\be \label {exp}
C_\delta^{-1} k(u(z_0),s(z_0))^{1/2} \leq \left \|B(z_0)^{-1} \cdot
\left ( \bm \pm i \\ 1 \em \right ) \right \| \leq C_\delta
k(u(z_0),s(z_0))^{1/2},
\ee
where $u,s,B$ are defined at $\{|\Im z_0|=\delta\}$
through non-tangential limits. 
Moreover, if $u(z) \in \H$ for $\Im z=0$ and
$s(z)=\overline {u(\overline z)}$ then $B$ can be taken
real-symmetric.

\end{lemma}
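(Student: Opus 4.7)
The plan is to reduce Lemma~\ref{cho} to Lemma~\ref{minim} on the disk via a conformal uniformization of the strip, and then to correct for the resulting loss of $\Z$-periodicity by an explicit twist. Let $\Omega := \{|\Im z| < \delta\}$ and fix a conformal isomorphism $\phi:\D\to\Omega$. Since $k(\cdot,\cdot)$ depends only on values in $\overline{\C}$, $k_\D(u\circ\phi,s\circ\phi)=k_\delta(u,s)$; Lemma~\ref{minim} applied to $u\circ\phi$ and $s\circ\phi$ yields $B_\D:\D\to\SL(2,\C)$ with $B_\D(z)\in K_{u\circ\phi(z),s\circ\phi(z)}$, $\|B_\D\|_\D^2 = k_\delta(u,s)$, and non-tangential boundary values on $\partial\D$ that are minimizers almost everywhere. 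Setting $B_\Omega:=B_\D\circ\phi^{-1}$ transfers these properties to the strip: $B_\Omega$ is holomorphic on $\Omega$ with $B_\Omega(z) \in K_{u(z),s(z)}$, $\|B_\Omega\|_\Omega^2 = k_\delta(u,s)$, and non-tangential minimizers almost everywhere on both boundary lines $|\Im z|=\delta$.

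The uniqueness argument following Lemma~\ref{minim} extends verbatim to $\Omega$: if $\tilde B$ and $B$ both satisfy these properties and $\tilde B=R_\theta B$ for $\theta$ holomorphic on $\Omega$, then $\|R_\theta\|=e^{2\pi|\Im\theta|}$ combined with the norm bounds forces $\Im\theta$ bounded, the minimizing condition on both boundary lines forces $\theta$ real a.e.\ on $\partial\Omega$, and a bounded harmonic function on the strip with vanishing non-tangential boundary values is identically zero (by transfer through $\phi$), so $\theta$ is a real constant. Applying this to $B_\Omega(z)$ and $B_\Omega(z+1)$, both of which satisfy the hypotheses by $\Z$-periodicity of $u$ and $s$, gives $B_\Omega(z+1)=R_\lambda B_\Omega(z)$ for some constant $\lambda\in\R$; using the period-one identity $R_{\lambda+1}=R_\lambda$ we normalize to $|\lambda|\le 1/2$. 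The twisted map $\tilde B(z) := R_{-\lambda z} B_\Omega(z)$ is then $\Z$-periodic (by $R_{\alpha+\beta}=R_\alpha R_\beta$) and still lies in $K_{u,s}$ because $R_\cdot$ fixes $\pm i$.

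The norm estimate follows at once from $\|R_{-\lambda z}\|=e^{2\pi|\lambda||\Im z|}\le e^{\pi\delta}$ throughout $\Omega$, giving $\|\tilde B\|_\delta^2 \le e^{2\pi\delta} k_\delta(u,s)$. For \eqref{exp}, the key identity is that $(\pm i,1)^\top$ are eigenvectors of $R_\theta$ with eigenvalues $e^{\pm 2\pi i\theta}$, so
\[
\tilde B(z_0)^{-1}\begin{pmatrix}\pm i\\1\end{pmatrix} = e^{\pm 2\pi i\lambda z_0}\, B_\Omega(z_0)^{-1}\begin{pmatrix}\pm i\\1\end{pmatrix};
\]
the scalar prefactor has modulus in $[e^{-\pi\delta},e^{\pi\delta}]$ at $|\Im z_0|=\delta$, while the minimizing property of $B_\Omega$ gives $\|B_\Omega(z_0)^{-1}(\pm i,1)^\top\|^2 = k(u(z_0),s(z_0))+k(u(z_0),s(z_0))^{-1}\in[k,2k]$ since $k\ge 1$. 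Finally, for the real-symmetric case under $s(z)=\overline{u(\overline z)}$, the symmetrization $B_\Omega^\sharp(z) := \overline{B_\Omega(\overline z)}$ satisfies the same hypotheses, so uniqueness gives $B_\Omega^\sharp = R_\mu B_\Omega$ with $\mu\in\R$, and iterating forces $2\mu\in\Z$. The case $\mu=1/2$ would make $B_\Omega(x)$ purely imaginary at real $x$, i.e.\ $B_\Omega(x)=iM$ with $M$ real and $\det M=-1$; such an $M$ acts as an orientation-reversing real M\"obius map, which cannot send $u(x)\in\H$ to $i\in\H$, so necessarily $\mu=0$, $B_\Omega$ is real-symmetric, and since $\lambda\in\R$ the twist $R_{-\lambda z}$ is also real-symmetric, making $\tilde B$ real-symmetric. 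The main technical step I expect is the strip version of the uniqueness argument, which however reduces immediately to the standard boundary uniqueness for bounded harmonic functions on a strip.
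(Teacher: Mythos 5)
Your proof is correct and follows essentially the same route as the paper: conformal transfer of Lemma~\ref{minim} from the disk to the strip, the uniqueness lemma applied to $B_\Omega(z+1)$ versus $B_\Omega(z)$ to produce the real constant $\lambda$, the twist by $R_{-\lambda z}$ to restore $1$-periodicity, and the minimizing boundary property together with $\|B^{-1}(\pm i,1)^\top\|^2=k+k^{-1}\in[k,2k]$ (using $k\geq 1$) to get \eqref{exp}. The only place where you diverge is the real-symmetric case: the paper introduces a pointwise decomposition $B'(z)=R_{\lambda''(z)}B''(z)$ at $\Im z=0$, where $B''(z)\in\SL(2,\R)$ exists because $u(z)\in\H$ and $s(z)=\overline{u(z)}$, and then extracts $\lambda'=0$ from the relation $R_{\overline{\lambda''(z)}}=R_{\lambda'}R_{\lambda''(z)}$. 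You instead iterate the symmetrization $B_\Omega^\sharp(z)=\overline{B_\Omega(\overline z)}$ to get $R_{2\mu}=\Id$, hence $\mu\in\{0,1/2\}$ up to the period, and then exclude $\mu=1/2$ by the geometric observation that $B_\Omega(x)=iM$ with $M$ real of determinant $-1$ cannot map $u(x)\in\H$ to $i$. Both are correct; yours is slightly longer but avoids introducing the auxiliary real matrix $B''$, while the paper's is more compressed. In either case the key inputs (Lemma~\ref{minim}, the uniqueness lemma, and the fact that $R_\theta$ has eigenvalues $e^{\pm 2\pi i\theta}$ on $(\pm i,1)^\top$) are used identically.
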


\begin{proof}

Let $B'$ be a holomorphic function with $B'(z) \in
K_{u(z),s(z)}$ defined on the strip
$\{|\Im z|<\delta\} \subset \C$,
which is given by Lemma \ref {minim} after
changing coordinates from the disk to the strip.  It satisfies
$\|B'\|_\delta=k_\delta(u,s)$.  Then $z \mapsto B'(z+1)$ has the same
properties, so by the previous lemma we have $B'(z+1)=R_\lambda B'(z)$ for
some $\lambda \in [-1/2,1/2)$.  Then $B(z)=R_{-\lambda z} B'(z)$ is
$1$-periodic holomorphic and satisfies $B(z) \in K_{u(z),s(z)}$ and
$\|B\|_\delta^2 \leq C_\delta k_\delta(u,s)$.

Since $B'(z_0)$ is minimizing at the boundary, (\ref {exp}) follows.

For the last statement, note that (still by the previous lemma)
$\overline {B'(\overline z)}=R_{\lambda'} B'(z)$ for some $\lambda' \in
[-1/2,1/2)$.  Moreover, if $\Im z=0$ we have $B'(z)=R_{\lambda''(z)}
B''(z)$, where $B''(z)$ is any $\SL(2,\R)$ matrix sending $u(z)$ to $i$
and $s(z)$ to $-i$. 
Then for $\Im z=0$ we have
$R_{\overline \lambda''(z)}=R_{\lambda'} R_{\lambda''(z)}$,
so $\Im \lambda''(z)=0$ and $\lambda'=0$, that is $B'$ is
real-symmetric, and the resulting $B$ will be real-symmetric as well. 
\end{proof}

\section{Complex conjugacies} \label {compsec}

\begin{thm} \label {comp}

Let $\alpha \in \R \setminus \Q$ be such $\beta=0$ and let
$A \in C^\omega_{\epsilon_0}(\R/\Z,\SL(2,\R))$ be such that
$L(\alpha,A(x+t i))=0$ for $|t|<\epsilon_0$.  Let $0<\epsilon<\epsilon_0$
and let $\kappa_0=\min
\{\frac {1} {2},1-\frac {\epsilon} {\epsilon_0}\}$.
Then for every $\theta>0$
sufficiently small, there exists $B \in C^\omega_\epsilon(\R/\Z,\SL(2,\C))$
such that $\|B\|_\epsilon^2 \leq \theta^{-1+\kappa_0-o(1)}$ and
$B(x+\alpha) R_{-i \theta} A(x) B(x)^{-1}=R_\lambda$ for some $\lambda \in
\C$ with $\Im \lambda=-\frac {\theta^{\kappa(\theta)}} {2 \pi}$.  Moreover,
$\kappa(\theta)>\frac {1} {2}-o(1)$, so
$\tilde A(x)=B(x+\alpha) A(x) B(x)^{-1}$ satisfies $\|\tilde
A-R_{\Re \lambda}\|_\epsilon \leq \theta^{\kappa_0-o(1)}$.

\end{thm}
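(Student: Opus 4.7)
My plan is to construct the conjugacy $B$ via Lemma \ref{cho} from the angle estimates of Theorem \ref{ang}, to identify the conjugated cocycle as a constant rotation $R_\lambda$ after a cohomological change of gauge, and to extract the improved exponent $\kappa(\theta)>1/2-o(1)$ from a Lyapunov-stability argument.

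First I would apply the first assertion of Theorem \ref{ang} at exponents arbitrarily close to $\kappa_0$ to obtain $d(u_\theta(z),s_\theta(z))\geq\theta^{1-\kappa_0+o(1)}$ throughout $|\Im z|<\epsilon$. Since $k(u,s)\leq 2/d(u,s)$ (as observed just before Lemma \ref{minim}), this gives $k_\epsilon(u_\theta,s_\theta)\leq \theta^{-1+\kappa_0-o(1)}$, and Lemma \ref{cho} produces a holomorphic $B\in C^\omega_\epsilon(\R/\Z,\SL(2,\C))$ sending $u_\theta\mapsto i$ and $s_\theta\mapsto-i$, with $\|B\|_\epsilon^2\leq\theta^{-1+\kappa_0-o(1)}$. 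Since $u_\theta,s_\theta$ are invariant sections of $R_{-i\theta}A$, the conjugated cocycle $M(x):=B(x+\alpha)R_{-i\theta}A(x)B(x)^{-1}$ preserves $\pm i$ and so lies in the abelian subgroup $\{R_\mu:\mu\in\C\}$; write $M(x)=R_{\mu(x)}$. The residual freedom $B\mapsto R_\psi B$ preserves $K_{u_\theta,s_\theta}$ and shifts $\mu(x)\mapsto\mu(x)+\psi(x+\alpha)-\psi(x)$, so solving the cohomological equation $\psi(x+\alpha)-\psi(x)=\bar\mu-\mu(x)$ (which admits a bounded analytic solution with arbitrarily small loss of strip since $\beta=0$) makes $M\equiv R_\lambda$ with $\lambda=\bar\mu$. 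Because $B$ has finite $L^\infty$ norm, the Lyapunov exponent is preserved, so $L_\theta=L(\alpha,R_\lambda)=2\pi|\Im\lambda|$; as $i$ is the image of the unstable direction, $\Im\lambda=-\theta^{\kappa(\theta)}/(2\pi)$.

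The central step is the intrinsic bound $\kappa(\theta)>1/2-o(1)$, which I would establish by running the construction above at a strip $\epsilon'\leq\epsilon_0/2$ on which $\kappa_0=1/2$. From $A(x)=R_{i\theta}B(x+\alpha)^{-1}R_\lambda B(x)$ I compute
\begin{equation}
\tilde A(x)=B(x+\alpha)R_{i\theta}B(x+\alpha)^{-1}\cdot R_\lambda,
\end{equation}
and since $R_{i\theta}-I=O(\theta)$ the first factor is $I+O(\|B\|_{\epsilon'}^2\theta)=I+O(\theta^{1/2-o(1)})$, whence $\|\tilde A-R_\lambda\|_{\epsilon'}=O(\theta^{1/2-o(1)})$. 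But $\tilde A$ is conjugate to the real cocycle $A$ and so has zero Lyapunov exponent, while $R_\lambda$ is a uniformly hyperbolic constant cocycle of Lyapunov exponent $L_\theta$. A standard cone-field robustness estimate forces any $\eta$-perturbation that destroys hyperbolicity to satisfy $\eta\gtrsim L_\theta$ (the contraction margin of $R_\lambda$ near its attracting fixed point $i$ is of order $L_\theta$), so $\theta^{1/2-o(1)}\gtrsim L_\theta=\theta^{\kappa(\theta)}$, giving the claimed bound.

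Finally, at the original strip of width $\epsilon$, the same expansion gives $\|\tilde A-R_\lambda\|_\epsilon\leq\|B\|_\epsilon^2\cdot O(\theta)\cdot\|R_\lambda\|=O(\theta^{\kappa_0-o(1)})$, and $\|R_\lambda-R_{\Re\lambda}\|=O(|\Im\lambda|)=O(\theta^{1/2-o(1)})$, which is dominated by $\theta^{\kappa_0-o(1)}$ since $\kappa_0\leq 1/2$. Combining yields $\|\tilde A-R_{\Re\lambda}\|_\epsilon\leq\theta^{\kappa_0-o(1)}$. The main obstacle is the robust-hyperbolicity step: one must quantify the threshold $\eta\gtrsim L_\theta$ precisely, which for the constant cocycle $R_\lambda$ reduces to the elementary observation that its projective derivative at $i$ has modulus $e^{-2L_\theta}\sim 1-2L_\theta$, so any perturbation smaller than this margin preserves a small invariant disk around $i$ and hence uniform hyperbolicity.
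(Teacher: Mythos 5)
Your overall strategy matches the paper's closely: use Theorem \ref{ang} to bound the angle, apply Lemma \ref{cho} to build $B$, observe the conjugated cocycle is a rotation, solve a cohomological equation to make it constant, and argue by contradiction via robustness of uniform hyperbolicity for $\kappa(\theta)>1/2-o(1)$. Two specific points, one a legitimate variation and one a genuine gap.

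The variation: for the bound $\kappa(\theta)>\frac12-o(1)$ you use only the first claim of Theorem \ref{ang} at a narrow strip where $\kappa_0=\frac12$, so $\|B\|^2\leq\theta^{-1/2-o(1)}$ and $\|\tilde A-R_\lambda\|\leq\theta^{1/2-o(1)}$. The paper instead invokes the \emph{second} claim, giving the sharper $\|B\|^2\leq\theta^{-\kappa(\theta)-o(1)}$ under the hypothesis $\kappa(\theta)<\frac12-\delta$. Both give $\|\tilde A-R_\lambda\|=o(|\Im\lambda|)$ in that case (with your version the exponent margin is $\delta$ rather than $2\delta$), and both conclude via the observation that such a small perturbation of $R_\lambda$ still sends $\H$ into a precompact subset of $\H$. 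So this part of your argument works.

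The gap: after Lemma \ref{cho} produces $B$ with $\|B\|^2\leq\theta^{-1+\kappa_0-o(1)}$, you write $M=R_\mu$ and solve the cohomological equation to replace $B$ by $R_\psi B$. But the norm of the final conjugacy is multiplied by $e^{2\pi\|\Im\psi\|}$, and $\|\Im\psi\|$ is comparable to $\|\Im\mu\|$. Your only a priori control on $\Im\mu$ is $\|M\|\leq C\|B\|^2\leq\theta^{-1+\kappa_0-o(1)}$, giving $|\Im\mu|=O(\ln(1/\theta))$; this would blow up the final bound by a factor $\theta^{-O(1)}$, not $\theta^{-o(1)}$. To get $\|\Im\mu\|=O(1)$ uniformly in $\theta$ one must use the boundary estimate (\ref{exp}) of Lemma \ref{cho}: since the angle $d(u,s)$ at $x$ and $x+\alpha$ is of the same order, (\ref{exp}) shows that $M=B(\cdot+\alpha)R_{-i\theta}AB^{-1}$ is bounded (independently of $\theta$) at the boundary of a slightly larger strip, hence $\Im\mu$ is bounded there, hence the cohomological solution satisfies $\|\psi\|_\epsilon=O(1)$. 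You cite Lemma \ref{cho} but never use its part (\ref{exp}); without it the claimed final bound $\|B\|_\epsilon^2\leq\theta^{-1+\kappa_0-o(1)}$ does not follow. Related to this, you should also note that the constancy of $L_\theta$ across the band forces the topological degree of $x\mapsto\mu(x)\in\C/\Z$ to vanish, which is needed for the cohomological equation to make sense with periodic data.
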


\begin{proof}

Fix $0<\kappa<\kappa'<\kappa_0$ and $0<\epsilon<\epsilon'<\epsilon_0$
such that $\kappa'<1-\frac {\epsilon'} {\epsilon_0}$.
For $\theta>0$ small, $(\alpha,R_{-i \theta} A(x+t i))$ is uniformly
hyperbolic for $|t|<\epsilon'$ and by the first claim of Theorem \ref {ang}
the unstable and stable directions
$u_\theta,s_\theta$ form an angle at least $\theta^{1-\kappa'}$.  By Lemma
\ref {cho}, there exists $B' \in C^\omega_{\epsilon'}(\R/\Z,\SL(2,\C))$
with $\|B'\|_{\epsilon'}^2 \leq C \theta^{-1+\kappa'}$, such that
$B' \cdot
u_\theta=i$ and $B' \cdot s_\theta=-i$.  Let $A'(x)=B'(x+\alpha) R_{-i
\theta} A(x) B'(x)^{-1}$.

Since the angle between $u_\theta$ and
$s_\theta$ at points $x$ and $x+\alpha$ is of the same order,
(\ref {exp}) yields $\|A'(x)\| \leq C$
at $|\Im x|=\epsilon'$ (take the vectors $\begin{pmatrix} \pm i\\1
\end{pmatrix}$ and apply $A'(x)$).
Since $A' \cdot i=i$ and $A' \cdot -i=-i$ we have that
$A'(x)=R_{\psi(x)}$ for some function $\psi:\R/\Z \to \C/\Z$ with a
holomorphic extension to $|\Im x|<\epsilon'$ such that $\Im \psi$ is bounded.
Thus for some $k \in \Z$,
$\psi(x)=k x+\phi(x)$ for some $\phi:\R/\Z \to \C$ which admits a
bounded holomorphic extension to $|\Im x|<\epsilon''$, where
$\epsilon<\epsilon''<\epsilon'$.  Since $L(\alpha,R_{-i \theta}
A(x+t i))=L(\alpha,A'(x+t i))$ is constant
$L_\theta=\theta^{\kappa(\theta)}$, it follows that
$k=0$ and $\Im \lambda=-\frac {\theta^{\kappa(\theta)}} {2 \pi}$, where
$\lambda$ is the average of $\phi$.
Using that $\beta=0$ we can solve the cohomological equation
$\phi(x)=w(x+\alpha)-w(x)+\lambda$ with
$\|w\|_\epsilon=O(1)$.  Let $B(x)=R_{-w x} B'(x)$.  Then $B(x+\alpha) R_{-i
\theta} A(x) B(x)^{-1}=R_\lambda$ with
$\|B\|_\epsilon^2=o(\theta^{-1+\kappa})$.



It remains to show that $\kappa(\theta)>\frac {1} {2}-o(1)$.  Assume that
$\kappa(\theta)<\frac {1} {2}-\delta$.  Then we can argue using the
second claim of Theorem \ref
{ang} to produce an improved $B$ with the bound $\|B\|_\epsilon^2
\leq \theta^{-\kappa(\theta)-o(1)}$ conjugating $R_{-i \theta} A$ to
some $R_\lambda$ with $\Im \lambda=-\frac {\theta^{\kappa(\theta)}} {2 \pi}$.
Then $\tilde A(x)=B(x+\alpha) A(x) B(x)^{-1}$ satisfies
$\|\tilde A-R_\lambda\|_\epsilon \leq
\theta^{\frac {1} {2}+\delta-o(1)}=o(-\Im \lambda)$.
It follows that $\tilde A$
sends $\H$ into a precompact subset of $\H$.  So
$(\alpha,\tilde A)$ is uniformly
hyperbolic, which is impossible since it is conjugated to $(\alpha,A)$. 
\end{proof}

Let us note the following consequence:

\begin{lemma} \label {polyn}

Let $\alpha \in \R \setminus \Q$ be such that $\beta=0$
and let $A \in C^\omega_{\epsilon_0}(\R/\Z,\SL(2,\R))$
be such that $L(\alpha,A(x+t i))=0$ for $|t|<\epsilon_0$.
Let $0<\epsilon<\epsilon_0$ and let $\kappa_0=\min \{\frac {1} {2},1-\frac
{\epsilon} {\epsilon_0}\}$.  Then
\be
\|A_n\|_\epsilon \leq n^{\frac {1} {\kappa_0}-1+o(1)}.
\ee

\end{lemma}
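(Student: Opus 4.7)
My plan is to use Theorem \ref{comp} to conjugate $A$ to a cocycle $\tilde A$ that is very close to a real rotation, so that its iterates grow slowly, and then transfer the growth bound back to $A_n$ paying only the cost of the conjugacy. The exponent $1/\kappa_0-1$ will come from optimizing the free parameter $\theta$.

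Concretely, fix $n$ and apply Theorem \ref{comp} with some small $\theta>0$ to be chosen later. This gives $B\in C^\omega_\epsilon(\R/\Z,\SL(2,\C))$ with $\|B\|_\epsilon^2\le \theta^{-1+\kappa_0-o(1)}$ and $\tilde A(x)=B(x+\alpha)A(x)B(x)^{-1}$ satisfying $\|\tilde A-R_{\Re\lambda}\|_\epsilon\le \theta^{\kappa_0-o(1)}$. Since $B\in\SL(2,\C)$, we have $\|B(x)^{-1}\|=\|B(x)\|$, so from $A_n(x)=B(x+n\alpha)^{-1}\tilde A_n(x)B(x)$ I get
\be
\|A_n\|_\epsilon\le \|B\|_\epsilon^2\cdot\|\tilde A_n\|_\epsilon\le \theta^{-1+\kappa_0-o(1)}\,\|\tilde A_n\|_\epsilon.
\ee
Because $R_{\Re\lambda}$ is a real rotation (norm $1$), the bound $\|\tilde A\|_\epsilon\le 1+\theta^{\kappa_0-o(1)}$ gives
\be
\|\tilde A_n\|_\epsilon\le \bigl(1+\theta^{\kappa_0-o(1)}\bigr)^n\le \exp\bigl(n\,\theta^{\kappa_0-o(1)}\bigr).
\ee

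Combining these two estimates yields $\|A_n\|_\epsilon\le \theta^{-1+\kappa_0-o(1)}\exp(n\theta^{\kappa_0-o(1)})$. The right-hand side is balanced by choosing $\theta$ so that $n\theta^{\kappa_0}$ is of order a constant, i.e.\ $\theta\sim n^{-1/\kappa_0}$ (which is indeed sufficiently small as $n\to\infty$, so Theorem \ref{comp} applies). With this choice $\theta^{-1+\kappa_0}=n^{(1-\kappa_0)/\kappa_0}=n^{1/\kappa_0-1}$ and the exponential factor is absorbed into the $o(1)$, giving $\|A_n\|_\epsilon\le n^{1/\kappa_0-1+o(1)}$.

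There is no real obstacle here; the entire content of the lemma is already packaged inside Theorem \ref{comp}, and the only point requiring minor care is the optimization of $\theta$ as a function of $n$ (together with the observation that the bound $\kappa(\theta)>1/2-o(1)$ from Theorem \ref{comp} is needed only so that $R_{\Re\lambda}$ really has norm $1$ after we split off the imaginary part of $\lambda$ into the error). The key mechanism is the trade-off between the size of the conjugacy, which blows up polynomially as $\theta\to 0$, and the closeness to a rotation, which improves polynomially as $\theta\to 0$; the exponent $1/\kappa_0-1$ is precisely the ratio dictated by this trade-off.
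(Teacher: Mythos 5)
Your overall strategy is exactly the paper's: conjugate via Theorem~\ref{comp}, bound $\|A_n\|_\epsilon \le \|B\|_\epsilon^2 \|\tilde A_n\|_\epsilon$, and optimize over $\theta$. However, there is a genuine gap at the optimization step. You take $\theta \sim n^{-1/\kappa_0}$ and assert that the factor $\exp\bigl(n\,\theta^{\kappa_0-o(1)}\bigr)$ is absorbed into the final $o(1)$, but with this choice $n\,\theta^{\kappa_0-\delta(\theta)} = n^{\delta(\theta)/\kappa_0}$ where $\delta(\theta)\to 0$ is the unspecified error in Theorem~\ref{comp}. The resulting factor $\exp\bigl(n^{\delta(\theta)/\kappa_0}\bigr)$ is \emph{not} of size $n^{o(1)}$ in general: that would require $\delta(\theta) = O(\ln\ln n/\ln n)$, and nothing in Theorem~\ref{comp} gives you a rate for $\delta$. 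So the claimed absorption is unjustified and the exponential factor could, a priori, swamp the polynomial bound.

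The fix (and the paper's actual argument) is to fix any $\kappa < \kappa_0$ and set $\theta = n^{-1/\kappa}$. Then for $n$ large, $\theta^{\kappa_0-o(1)} \le \theta^{\kappa'}$ for some fixed $\kappa' \in (\kappa,\kappa_0)$, so $n\,\theta^{\kappa_0-o(1)} \le n^{1-\kappa'/\kappa} \to 0$, giving $\|\tilde A_n\|_\epsilon = O(1)$ with no exponential to absorb. This yields $\|A_n\|_\epsilon = O\bigl(\theta^{-1+\kappa}\bigr) = O\bigl(n^{1/\kappa - 1}\bigr)$; since $\kappa < \kappa_0$ is arbitrary, this is exactly $n^{1/\kappa_0 - 1 + o(1)}$. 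In short, the $o(1)$ in the Lemma should come from letting the fixed exponent $\kappa$ approach $\kappa_0$, not from the $o(1)$ inside Theorem~\ref{comp}; your heuristic trade-off analysis is correct but the precise choice of $\theta$ must leave a definite gap below $\kappa_0$.
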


\begin{proof}

Let $0<\kappa<\kappa_0$ and, take $\theta=n^{-1/\kappa}$ and use
Theorem \ref {comp} to get
$\|\tilde A_n\|=O(1)$ so that $\|A_n\|=O(\theta^{-1+\kappa})$.
\end{proof}

\section{Real conjugacies} \label {realsec}

Let $\theta>0$ be small and let $u=u_\theta$ and $s=s_\theta$ be the
unstable and stable directions of $(\alpha,R_{-i \theta} A)$, defined through
a band of size $\epsilon_0-o(1)$.

Take $0<\epsilon<\epsilon_0$ and let
$B$ be given by Lemma \ref {cho}, so that $B \cdot u=i$ and $B \cdot s=-i$. 
Then $\|B\|_{\epsilon_0-o(1)}^2 \leq \theta^{-\kappa(\theta)-o(1)}$ by
Theorem \ref {ang}.
Let $u'(z)=\overline {u(\overline {z})}$ and
$s'(z)=\overline {s(\overline {z})}$.

We have previously constructed
conjugacies by straightening up the pair $(u,s)$, but this was not
real-symmetric.  In order to do a real symmetric construction, we will make
use of the additional directions $u'$ and $s'$.

Let $U=B^{-1} \cdot \begin{pmatrix} i\\1 \end{pmatrix}$ and
$S=B^{-1} \cdot \begin{pmatrix} -i\\1 \end{pmatrix}$.
$U'(z)=\overline {U(\overline {z})}$ and
$S'(z)=\overline {S(\overline {z})}$.
Let $\Delta$ be the determinant of the matrix with columns $U$ and $U'$. 
Let also $\omega(z)=\|U(z)\|\|U'(z)\|$ so that $|\Delta(z)|/\omega(z)=
d(u(z),u'(z))$.  Note that
$\omega(z) \leq \theta^{-\kappa(\theta)-o(1)}$ over $|\Im
z|<\epsilon_0-o(1)$.

We note that at $\Im z=0$, $u$ and $u'$
must have distance at least of order
$\theta$, by Lemma \ref {field}.  This can be improved for many
$z$: by Lemma \ref {u}, $d(u(z),u'(z))>\theta^{1-\kappa(\theta)+o(1)}$ over
a set of probability $1-o(1)$.
(Recall that by Theorem \ref {comp}, we have $\kappa(\theta) \geq
1/2-o(1)$.)

For non real $z$ we have the following estimate:

\begin{lemma} \label {lat}

Let $z_*$ be such that $|\Im z_*|<\epsilon$.  Then
\begin{equation}
\sup_{\Im z=\Im z_*} |\Delta(z)| \leq
\theta+\max \{\theta,d(u(z_*),u'(z_*))\} \theta^{-o(1)}
\sup_{\Im z=\Im z_*} \omega(z).
\end{equation}

\end{lemma}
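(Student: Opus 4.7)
The plan is to bound $|\Delta|$ on the line $\Im z = \Im z_*$ by first showing $|\Delta|$ is small at the orbit translates $z_k = z_* + k\alpha$ for $k$ in a logarithmic range, and then extending this bound to the whole line via an exponentially--decaying interpolation on the cylinder $\C/\Z$.  Set $\delta = d(u(z_*), u'(z_*))$ and $M = \sup_{\Im z = \Im z_*} \omega(z)$.  The first step is propagation of the angle along the orbit.  Since $A$ is real, Schwarz reflection shows that while $u$ is the unstable direction of $(\alpha, R_{-i\theta} A)$, its symmetrization $u'(z) = \overline{u(\bar z)}$ is the unstable direction of $(\alpha, R_{i\theta} A)$; hence $u(z_k) = (R_{-i\theta} A)_k(z_*)\cdot u(z_*)$ while $u'(z_k) = (R_{i\theta} A)_k(z_*) \cdot u'(z_*)$.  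I would decompose
\begin{equation*}
d(u(z_k), u'(z_k)) \leq \|(R_{-i\theta} A)_k(z_*)\|^2 \delta + d\bigl((R_{-i\theta} A)_k u'(z_*),\, (R_{i\theta} A)_k u'(z_*)\bigr),
\end{equation*}
where the first term is a standard projective distortion estimate.  The second, measuring the discrepancy between the two cocycles, I would control by expanding $(R_{-i\theta} A)_k - (R_{i\theta} A)_k$ telescopically and using $\|R_{-i\theta} - R_{i\theta}\|_\epsilon = O(\theta)$ together with the polynomial norm bound $\|A_k\|_\epsilon \leq k^{O(1)}$ of Lemma \ref{polyn}; this yields a bound of size $O(\theta k^{O(1)})$.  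Combining, $|\Delta(z_k)| = \omega(z_k)\, d(u(z_k), u'(z_k)) \leq M \max\{\delta, \theta\} \cdot k^{O(1)}$.

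Next, using the hypothesis $\beta = 0$, I would choose a continued fraction denominator $N = q_n$ with $N \asymp \log(1/\theta)$.  For such $N$ the polynomial factor evaluates to $N^{O(1)} = (\log(1/\theta))^{O(1)} = \theta^{-o(1)}$, so $|\Delta(z_k)| \leq M \max\{\delta, \theta\} \theta^{-o(1)}$ for $0 \leq k < N$.  Passing to the cylinder via $w = e^{2\pi i z}$, $\Delta$ becomes analytic on the annulus $e^{-2\pi\eta} < |w| < e^{2\pi\eta}$, where $\eta = \epsilon_0 - \epsilon - o(1) > 0$, bounded there by $E := \sup_{|\Im z| < \epsilon_0 - o(1)} \omega \leq \theta^{-1/2 - o(1)}$.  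The points $w_k = e^{2\pi i z_k}$ are $O(1/N)$--equidistributed on the circle $|w| = e^{-2\pi \Im z_*}$.  A standard trigonometric / Lagrange interpolation estimate on the annulus then yields, for any $z$ with $\Im z = \Im z_*$,
\begin{equation*}
|\Delta(z)| \leq C(\log N) \max_{0 \leq k < N} |\Delta(z_k)| + C E\, e^{-cN},
\end{equation*}
with $c > 0$ depending only on $\eta - |\Im z_*| > 0$.  Choosing $N = C_0 \log(1/\theta)$ with $C_0$ large enough that $E e^{-cN} \leq \theta$, and noting that $\log N = \theta^{-o(1)}$, gives the claimed bound $\sup_{\Im z = \Im z_*}|\Delta(z)| \leq \theta + M \max\{\theta, \delta\} \theta^{-o(1)}$.

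The main technical obstacle is the tension between the propagation step (which prefers $N$ small so the polynomial factor $k^{O(1)}$ stays $\theta^{-o(1)}$) and the interpolation (which requires $N$ large enough for the tail term to be $\leq \theta$).  Naive linear (Cauchy-derivative--based) interpolation would force $N \geq \theta^{-O(1)}$, destroying the $\theta^{-o(1)}$ multiplicative factor.  The resolution is to exploit the cylinder structure of $\C/\Z$: trigonometric / Lagrange interpolation on a non--degenerate annulus gives \emph{exponential} rather than merely polynomial decay in $N$, so that $N \asymp \log(1/\theta)$ suffices, and the $\beta = 0$ hypothesis supplies continued fraction denominators at exactly this logarithmic scale.
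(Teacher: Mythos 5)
Your overall plan — propagate the small-angle hypothesis along the orbit $z_*+k\alpha$ for $O(\log\theta^{-1})$ steps using almost-invariance plus the polynomial growth from Lemma~\ref{polyn}, then extend from the orbit to the whole line by interpolation and use $\beta=0$ — is exactly the paper's, and the propagation step is fine (the paper routes through $A_k$ and $\theta$-almost invariance while you route through the exact cocycles $(R_{\pm i\theta}A)_k$ and a telescoping comparison; these are equivalent and both land on $|\Delta(z_*+k\alpha)| \le M\max\{\delta,\theta\}\,\theta^{-o(1)}$ for $k$ in the logarithmic range).

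The gap is in the interpolation step. You assert that Lagrange/trigonometric interpolation at the nodes $w_k=e^{2\pi i z_k}$, $0\le k<q_n$, carries a Lebesgue constant $O(\log N)$ because the nodes are ``$O(1/N)$-equidistributed.'' Low discrepancy does not control the Lebesgue constant; what matters is the gap structure of the node set. By the three-distance theorem, $\{k\alpha\}_{0\le k<q_n}$ has two gap lengths, roughly $\|q_{n-1}\alpha\|\asymp 1/q_n$ and $\|q_{n-2}\alpha\|\asymp 1/q_{n-1}$, with ratio $\asymp a_n=q_n/q_{n-1}$, which under $\beta=0$ is only $\theta^{-o(1)}$, not $O(1)$; and if one needs more than $q_n$ sample points to match the truncation degree of $\Delta$ (which depends on $\epsilon_0-\epsilon$ and the bound on $\|\Delta\|_{\epsilon_0-o(1)}$, and in general forces $2N_\theta+1>q_n$, as in the paper), the minimum separation drops to $\|q_n\alpha\|\sim 1/q_{n+1}$. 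In either case the Lebesgue constant genuinely depends on the next denominator $q_{n+1}$ and is not $O(\log N)$. The paper handles this by truncating $\Delta$ to a trigonometric polynomial $\Delta_\theta$ of degree $N_\theta=O(\log\theta^{-1})$ and applying the quantitative Lagrange estimate of Theorem 6.1 of \cite{AJ2}, which produces the factor $Cq_{n+1}^{Cr}$; the role of $\beta=0$ is precisely to guarantee $\ln q_{n+1}=o(q_n)$, hence $q_{n+1}^{Cr}=\theta^{-o(1)}$. In your write-up, $\beta=0$ is only invoked to find a denominator $q_n\asymp\log\theta^{-1}$, which misses where the hypothesis is actually doing the work. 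Since the target loss is only $\theta^{-o(1)}$, the strategy does survive once the $O(\log N)$ claim is replaced by the \cite{AJ2} estimate, but as written this step is not justified.
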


\begin{proof}

Notice that both $u$ and $u'$ are $\theta$-almost invariant under $A$, since
we have $R_{-i
\theta} A(z) \cdot u(z)=u(z+\alpha)$ and
$R_{i \theta} A(z) \cdot u'(z)=u'(z+\alpha)$.
Fix $\delta>0$ small.  By Lemma \ref {polyn},
\begin{equation} \label {bla5}
d(u(z_*+k \alpha),u'(z_*+k\alpha)) \leq
\theta^{-\delta} d(u(z_*),u'(z_*))+k \theta^{1-\delta}, \quad \mathrm {for}
\quad 0 \leq k \leq \theta^{-c \delta}
\end{equation}
for some $0<c<1$ depending on
$\epsilon$ and $\epsilon_0$.\footnote {We have
$d(A_k(z_*) \cdot u(z_*),
A_k(z_*) \cdot u'(z_*)) \leq C \|A_k(z_*)\|^2 d(u(z_*),u'(z_*))$.
By almost invariance
$d(A_{k-j+1}(z_*+(j-1) \alpha) \cdot u(z_*+(j-1) \alpha),
A_{k-j}(z_*+j \alpha) \cdot u(z_*+j \alpha)) \leq C \|A_{k-j}(z_*+j
\alpha)\|^2 \theta$ and
$d(A_{k-j+1}(z_*+(j-1) \alpha) \cdot u'(z_*+(j-1) \alpha),
A_{k-j}(z_*+j \alpha) \cdot u'(z_*+j \alpha)) \leq C \|A_{k-j}(z_*+j
\alpha)\|^2 \theta$.}

We now use a Lagrange interpolation argument.
Let $\Delta_\theta(x)=\sum_{|k| \leq N_\theta} \hat \Delta_k e^{2 \pi i k
x}$ where $N_\theta
\geq -\ln \theta$
is chosen minimal so that $\|\Delta-\Delta_\theta\|_\epsilon<\theta^2$.
Since
$\|\Delta\|_{\epsilon_0-o(1)} \leq \theta^{-\kappa(\theta)-o(1)}$, we have
$N_\theta=O(-\ln \theta)$, so $2 N_\theta \leq \theta^{-c \delta}$.  In
particular (\ref {bla5}) implies 
\begin{equation} \label {bla7}
|\Delta(z_*+k \alpha)| \leq (\theta^{-2 \delta} d(u(z_*),u'(z_*))+\theta^{1-2
\delta}) \sup_{\Im z=\Im z_*} \omega(z).
\end{equation}

Let $q_n$ be maximal with
$q_n \leq 5 N_\theta$, and let $r \geq 1$ be minimal
such that $r q_n \geq 2N_\theta+2$.  In particular $rq_n<q_{n+1}$.  By
Theorem 6.1 of \cite {AJ2}, we have
\begin{equation}
\sup_{\Im z=\Im z_*} |\Delta_\theta(z)| \leq C q_{n+1}^{C r}
\sup_{0 \leq k \leq 2N_\theta} |\Delta_\theta(z_*+k \alpha)|.
\end{equation}
Since $\beta=0$, $C q_{n+1}^{C r} \leq C q_{n+1}^{C \frac {5 N_\theta}
{q_n}} \leq e^{o(N_\theta)} \leq \theta^{-o(1)}$.  It follows that
\begin{equation}
\sup_{\Im z=\Im z_*}
|\Delta_\theta(z)| \leq \theta^{-o(1)}
(\theta^2+\sup_{0 \leq k \leq 2N_\theta} |\Delta(z_*+k \alpha)|),
\end{equation}
so that
\begin{equation}
\sup_{\Im z=\Im z_*}
|\Delta(z)| \leq \theta^2+\theta^{-o(1)}
(\theta^2+\sup_{0 \leq k \leq 2N_\theta} |\Delta(z_*+k \alpha)|)
\end{equation}
which together with (\ref {bla7}) implies the desired estimate.
\end{proof}

\begin{lemma} \label {u1}

For $0<\epsilon'<\epsilon$, if $\inf_{|\Im z|<\epsilon}
d(u(z),u'(z))<\theta^l$ with $0 \leq l \leq 1$
then $\|U\|^2_0 \leq \max \{\theta^{\kappa(\theta)-o(1)},
\theta^{-(1-\frac
{\epsilon} {\epsilon'}) \kappa(\theta)-\frac {\epsilon}{\epsilon'} (1-l)
-o(1)}\}$.  In particular, if $\|U\|_0^2 \geq 2$ we have
$l \leq 1-(1-\frac {\epsilon'} {\epsilon}) \kappa(\theta)+o(1)$.

\end{lemma}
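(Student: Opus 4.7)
The plan is to convert the pointwise smallness of $d(u, u')$ in the strip into a bound on $|\Delta| = |\det(U, U')|$ throughout a horizontal sub-strip, then use the improved lower bound on $d$ from Lemma~\ref{u} to bound $\omega(x) = \|U(x)\|^2$ on a large subset of the real line, and finally extend to the sup on $\R$ using subharmonicity of $\log \omega$.

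First, pick $z_* \in \{|\Im z| < \epsilon\}$ with $d(u(z_*), u'(z_*)) < \theta^l$. By Lemma~\ref{lat} together with the uniform bound $\omega \leq \theta^{-\kappa(\theta) - o(1)}$ on the strip, one has $\sup_{\Im z = \Im z_*} |\Delta(z)| \leq \theta^{l - \kappa(\theta) - o(1)}$ (since $l \leq 1$, the $\theta$ term in Lemma~\ref{lat} is absorbed). The identity $\Delta(\bar z) = -\overline{\Delta(z)}$, which follows from $U'(\bar z) = \overline{U(z)}$, gives the same bound on $\Im z = -\Im z_*$. By subharmonicity of $\log|\Delta|$ and the maximum principle on the strip $|\Im z| < |\Im z_*|$, $|\Delta(x)| \leq \theta^{l - \kappa(\theta) - o(1)}$ for all $x \in \R$. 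Combining with Lemma~\ref{u}, which provides a set $E \subset \R/\Z$ of probability $1 - o(1)$ on which $d(u(x), u'(x)) > \theta^{1 - \kappa(\theta) + o(1)}$, and using $\omega(x) = |\Delta(x)|/d(u(x), u'(x))$ on the real line, I obtain $\omega(x) \leq \theta^{l - 1 + o(1)}$ for $x \in E$.

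To extend this to the sup of $\omega$ over all of $\R$, I would exploit that $\log \omega$ is subharmonic on $|\Im z| < \epsilon$. Writing $T = -\ln \theta$, a Poisson integral on the strip $|\Im z| < \epsilon$ bounds $\log \omega(x_0 + i \epsilon')$ for any $\epsilon' \in (0, \epsilon)$: the contribution from the outer boundary $|\Im z| = \epsilon$ is at most $\kappa(\theta)(\epsilon'/\epsilon) T + o(T)$, the contribution from the real-line part over $E$ is at most $(1-l)(1 - \epsilon'/\epsilon) T$, and the contribution from $\R \setminus E$ is $o(T)$, since the Poisson kernel on the strip is bounded at height $\epsilon' > 0$ and $|\R \setminus E| = o(1)$. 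Applying the maximum principle to $\log \omega$ on the smaller strip $|\Im z| < \epsilon'$ transports this bound to the real line, giving the stated exponent. The floor $\theta^{\kappa(\theta) - o(1)}$ in the max reflects the a priori lower bound $\|U\|^2 \geq \theta^{\kappa(\theta) + o(1)}$, itself a consequence of $|\det(U, S)| = 2$ (hence $\|U\| \|S\| \geq 2$) combined with $\|S\|^2 \leq \theta^{-\kappa(\theta) - o(1)}$. The ``in particular'' statement follows: if $\|U\|_0^2 \geq 2$ then the second entry of the max must be $\geq 2$, forcing its exponent to be $\leq o(1)$, which rearranges to $l \leq 1 - (1 - \epsilon'/\epsilon) \kappa(\theta) + o(1)$.

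The main obstacle will be the Poisson integral step: careful control of the contribution from the exceptional set $\R \setminus E$ is essential and relies on quantitative boundedness of the Poisson kernel on the strip at positive imaginary height, together with the measure $o(1)$ bound on $\R \setminus E$. A secondary subtlety is that the propagation in Step 1 gives a nontrivial bound only in the sub-strip $|\Im z| \leq |\Im z_*|$; when $|\Im z_*|$ is very small, the hypothesis is essentially a hypothesis at the real line, and the argument simplifies accordingly, using Lemma~\ref{lat} directly near the real axis.
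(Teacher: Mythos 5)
Your plan is structurally close to the paper's but omits the key bootstrap, and as a result the exponent you actually derive is not the stated one. Write $r=\epsilon'/\epsilon$ and $\gamma_0=-\log_\theta\|U\|_0^2$. You feed the uniform strip bound $\omega\leq\theta^{-\kappa(\theta)-o(1)}$ into Lemma \ref{lat} to get $\|\Delta\|_0\leq\theta^{l-\kappa(\theta)-o(1)}$, then $\omega\leq\theta^{l-1+o(1)}$ on $E$, and then a Poisson integral at height $\epsilon'$ followed by the maximum principle. Carrying out that Poisson integral gives $\gamma_0\leq r\,\kappa(\theta)+(1-r)(1-l)+o(1)$, whereas the lemma asserts $\gamma_0\leq(1-\tfrac1r)\kappa(\theta)+\tfrac1r(1-l)+o(1)$. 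These are genuinely different: their difference is $(\tfrac1r+r-1)\bigl(\kappa(\theta)-(1-l)\bigr)$, so yours is strictly weaker precisely when $l>1-\kappa(\theta)$, which is the regime that matters. Concretely, your exponent $-r\kappa(\theta)-(1-r)(1-l)$ is always $\leq 0$, so your bound on $\|U\|_0^2$ is always $\geq 1$, and the ``in particular'' conclusion ($\|U\|_0^2\geq2$ forces $l\leq 1-(1-r)\kappa(\theta)+o(1)$) cannot be extracted from it; when you write ``giving the stated exponent'' the computation has silently gone wrong.

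The missing idea is a self-referential estimate. Set $\gamma_t=-\log_\theta\sup_{|\Im z|\leq t}\omega$. In Lemma \ref{lat} one should bound $\sup_{\Im z=\Im z_*}\omega$ by $\theta^{-\gamma_{\epsilon'}}$ rather than by the crude $\theta^{-\kappa(\theta)-o(1)}$ (this is why the relevant $z_*$ lives in $|\Im z|<\epsilon'$, not $|\Im z|<\epsilon$); this gives $\|\Delta\|_0\leq\theta+\theta^{l-\gamma_{\epsilon'}-o(1)}$ and hence, after dividing by $d(u,u')\geq\theta^{1-\kappa(\theta)+o(1)}$ on $E$ and pushing to all of $\R$ by subharmonicity (a Poisson estimate at a small height $\delta\to0$, \emph{not} at height $\epsilon'$, so as to obtain $\gamma_0\leq m_E+o(1)$ rather than a convex combination with $\kappa(\theta)$),
\begin{equation*}
\gamma_0 \leq \max\bigl\{-\kappa(\theta),\ \gamma_{\epsilon'}+1-l-\kappa(\theta)\bigr\}+o(1).
\end{equation*}
This still involves $\gamma_{\epsilon'}$, which by the three-lines theorem satisfies $\gamma_{\epsilon'}\leq r\,\kappa(\theta)+(1-r)\gamma_0+o(1)$. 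Substituting and solving for $\gamma_0$ yields $\gamma_0\leq(1-\tfrac1r)\kappa(\theta)+\tfrac1r(1-l)+o(1)$ in the second case, which is the stated bound and is sharp enough for the ``in particular.'' In short: by replacing $\gamma_{\epsilon'}$ by its worst-case value $\kappa(\theta)$ in the very first step, you lose the feedback loop that makes the estimate bite.
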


\begin{proof}

Define $\gamma_t$, $0 \leq t \leq \epsilon$ so that
$\theta^{-\gamma_t}=\sup_{|\Im z| \leq t}
\omega(z)$.  In particular $\|U\|_0^2=\theta^{-\gamma_0}$.
We have
$\gamma_\epsilon \leq \kappa(\theta)+o(1)$ and
subharmonicity gives $\gamma_{\epsilon'} \leq
\frac {\epsilon'} {\epsilon} \kappa(\theta)+(1-\frac {\epsilon'}
{\epsilon})\gamma_0+o(1)$.
Noting that $i \Delta$ is real-symmetric, Lemma \ref {lat} gives
$\|\Delta\|_0 \leq \theta+\theta^{l-\gamma_{\epsilon'}-o(1)}$.
By Lemma \ref {u}, we conclude that
\begin{equation} \label {38}
\omega(z) \leq
\theta^{-1+\kappa(\theta)-o(1)} |\Delta(z)| \leq
\theta^{\kappa(\theta)-o(1)}+
\theta^{l-\gamma_{\epsilon'}-1+\kappa(\theta)-o(1)}
\end{equation}
for $\Im z=0$ with probability $1-o(1)$.
Since there is also an upper bound $\omega(z) \leq
\theta^{-\kappa(\theta)-o(1)}$
through $|\Im z|<\epsilon$,
this implies that (\ref {38}) in fact holds for all $\Im z=0$
(subharmonicity).  Thus $\gamma_0 \leq \max \{-\kappa(\theta),
\gamma_{\epsilon'}+1-l-\kappa(\theta)\}+o(1)$.  In particular, if $\gamma_0
\geq -\kappa(\theta)+o(1)$ we have
$\gamma_0 \leq
\frac {\epsilon'} {\epsilon} \kappa(\theta)+(1-\frac {\epsilon'}
{\epsilon})\gamma_0+1-l-\kappa(\theta)+o(1)$, so that $\gamma_0 \leq
(1-\frac{\epsilon}{\epsilon'})\kappa(\theta)+\frac {\epsilon} {\epsilon'}
(1-l)+o(1)$.
\end{proof}

\begin{thm} \label {real}

Let $\alpha \in \R \setminus \Q$ be such $\beta=0$ and let $A \in
C^\omega_{\epsilon_0}(\R/\Z,\SL(2,\R))$ be such that $L(\alpha,A(x+t i))=0$
for $|t|<\epsilon_0$. Let $0<\epsilon'<\epsilon_0$ and let
$\kappa'_0=\frac {1}
{2} (1-\frac {\epsilon'} {\epsilon_0})$.  Then for every $\theta>0$
sufficiently small,
there exists $B_r \in
C^\omega_{\epsilon'}(\R/\Z,\SL(2,\R))$ such that
$\|B_r\|^2_{\epsilon'} \leq \theta^{-1+\kappa'_0-o(1)}$ and $\tilde
A_r(x)=B_r(x+\alpha)A(x)B_r(x)^{-1}$ satisfies $\|\tilde
A_r-R_\lambda\|_{\epsilon'} \leq \theta^{\kappa'_0-o(1)}$ for some $\lambda
\in \R$.

\end{thm}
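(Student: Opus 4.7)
The strategy is to apply the real-symmetric variant of Lemma \ref{cho} to one of the symmetric pairs $(u_\theta, u'_\theta)$ or $(s'_\theta, s_\theta)$, where $u'_\theta(z) = \overline{u_\theta(\overline z)}$ and $s'_\theta(z) = \overline{s_\theta(\overline z)}$, producing a real-symmetric $B_r$. The decisive input is a uniform lower bound $d \geq \theta^{1-\kappa'_0+o(1)}$ on the angle of the chosen pair throughout $|\Im z| < \epsilon'$, which Lemma \ref{cho} translates into $\|B_r\|_{\epsilon'}^2 \leq \theta^{-1+\kappa'_0-o(1)}$.

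First I would fix an auxiliary width $\epsilon'' \in (\epsilon', \epsilon_0)$ close enough to $\epsilon_0$ that $\frac{1}{2}(1 - \epsilon'/\epsilon'')$ exceeds $\kappa'_0 = \frac{1}{2}(1 - \epsilon'/\epsilon_0)$ by only $o(1)$, and carry the complex conjugacy apparatus of Section \ref{compsec} over to $|\Im z| < \epsilon''$: take $B$ from Lemma \ref{cho} applied to $(u_\theta, s_\theta)$ in that strip, and set $U = B^{-1} \cdot (i, 1)^\top$, $S = B^{-1} \cdot (-i, 1)^\top$. The normalization $\det(U, S) \equiv 2i$ gives $\|U\| \cdot \|S\| \geq 2$ pointwise on $\Im z = 0$, so at least one of $\|U\|_0^2 \geq 2$ or $\|S\|_0^2 \geq 2$ must hold.

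In Case A ($\|U\|_0^2 \geq 2$), Lemma \ref{u1} with its outer and inner radii taken as $\epsilon''$ and $\epsilon'$ yields $d(u_\theta, u'_\theta) \geq \theta^{1-(1-\epsilon'/\epsilon'')\kappa(\theta)+o(1)}$ throughout $|\Im z| < \epsilon'$; combining with the bootstrap $\kappa(\theta) \geq \frac{1}{2} - o(1)$ from Theorem \ref{comp} and letting $\epsilon'' \to \epsilon_0$ produces the desired $d(u_\theta, u'_\theta) \geq \theta^{1-\kappa'_0+o(1)}$. Case B ($\|S\|_0^2 \geq 2$) is symmetric: the analog of Lemma \ref{u} for $s_\theta$---namely that $s_\theta$ is bounded away from $\R$ into the lower half-plane by at least $\theta^{1-\kappa(\theta)+o(1)}$ on a set of full density---follows from Lemma \ref{field} applied to the inverse cocycle (whose attracting fixed point on $\overline\C \setminus \H$ is $s_\theta$), and the rest of the argument of Lemma \ref{u1} runs verbatim with $s_\theta, s'_\theta, S$ in place of $u_\theta, u'_\theta, U$. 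With the angle bound in hand, Lemma \ref{cho} in its real-symmetric form, applied to the good pair (swapping so the half-plane-valued member comes first), delivers $B_r \in C^\omega_{\epsilon'}(\R/\Z, \SL(2,\R))$ with the claimed bound.

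Finally, verify $\tilde A_r = B_r(\cdot+\alpha) A B_r^{-1}$ is close to a real rotation. In Case A, $B_r \cdot u_\theta \equiv i$; the fiber equation $R_{-i\theta} A \cdot u_\theta = u_\theta(\cdot+\alpha)$ rewrites as $A(z) \cdot u_\theta(z) = R_{i\theta} \cdot u_\theta(z+\alpha)$, which is $O(\theta)$-close to $u_\theta(z+\alpha)$ since $R_{i\theta} = I + O(\theta)$. Applying $B_r(z+\alpha)$ amplifies the error by $\|B_r\|_{\epsilon'}^2$, so $\tilde A_r \cdot i = i + O(\theta^{\kappa'_0-o(1)})$, and the same bound holds for $-i$. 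Hence $\tilde A_r$ sits within $\theta^{\kappa'_0-o(1)}$ of the unique M\"obius transformation fixing both $i$ and $-i$, which is some rotation $R_\lambda$; reality of $\tilde A_r$ (both $A$ and $B_r$ are real) forces $\lambda \in \R$. The principal obstacle throughout is extracting the tight exponent $1-\kappa'_0$: the factor $1/2$ mediates between the hyperbolic lower bound $d \gtrsim \theta^{1-\kappa(\theta)}$ on a full-density subset of $\R$ (Lemma \ref{u}) and the subharmonic upper bound $d \geq \theta^{\kappa(\theta)+o(1)}$ (Lemma \ref{brown1}), and the prior bootstrap $\kappa(\theta) \geq 1/2 - o(1)$ from Theorem \ref{comp} is indispensable for the parameter arithmetic to close up.
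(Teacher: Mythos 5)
Your overall strategy is the same as the paper's: build a real-symmetric $B_r$ from Lemma \ref{cho} applied to one of the pairs $(u_\theta,u'_\theta)$ or $(s'_\theta,s_\theta)$, use Lemma \ref{u1} (and its $s$-analog) together with $\kappa(\theta) \ge \tfrac12 - o(1)$ to get the angle bound $\theta^{1-\kappa_0'-o(1)}$, and use the dichotomy $\|U\|_0^2 \ge 2$ or $\|S\|_0^2 \ge 2$ coming from the determinant normalization. Those parts are fine.

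The gap is in the final step. From $\tilde A_r(z)\cdot(\pm i) = \pm i + O(\theta^{\kappa_0'-o(1)})$ together with reality you may conclude that, for each fixed $z$, $\tilde A_r(z)$ is $\theta^{\kappa_0'-o(1)}$-close to \emph{some} rotation $R_{\psi(z)}$ --- but the claim that there is ``the unique M\"obius transformation fixing both $i$ and $-i$'' is wrong: the stabilizer of the pair $(i,-i)$ in $\SL(2,\C)$ is the entire one-parameter family $\{R_\lambda\}_{\lambda \in \C/\Z}$. Consequently the approximating rotation $R_{\psi(z)}$ depends on $z$, and there is no a priori reason for $\psi$ to be close to constant; in general it is not. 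The paper closes this by writing the conjugated cocycle as $A''(z) R_{\psi(z)}$ with $A'' = \id + O(\theta^{\kappa''})$, decomposing $\psi(z) = kz + \phi(z)$ with $k \in \Z$ and $\phi$ single-valued, killing $k$ by the vanishing of the Lyapunov exponent through the band, solving the cohomological equation $\phi(x) = w(x+\alpha) - w(x) + \lambda$ with a \emph{bounded real-symmetric} $w$ (here $\beta = 0$ is essential), and finally redefining $B_r := R_w B_r'$. Without the cohomological step your $B_r$ does not conjugate $A$ to something close to a constant, only to something close to a variable rotation, so the theorem as stated is not reached. A secondary consequence of omitting this step: you need the angle bound and hence $B_r'$ on a strip strictly wider than $\epsilon'$ (the paper's $\epsilon''$), because solving for $w$ and composing with $R_w$ costs a little width; applying Lemma \ref{cho} directly at width $\epsilon'$ leaves no room for that.
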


\begin{proof}

Fix $0<\kappa'<\kappa'_0$, take $\epsilon''$ slightly larger
than $\epsilon'$ and $\kappa''$ slightly larger than $\kappa'$.  Take
$0<\epsilon<\epsilon_0$ close to $\epsilon_0$.
Assume first that $\|U\|_0^2 \geq 2$.
Since $\kappa(\theta) \geq \frac {1} {2}-o(1)$,
Lemma \ref {u1} implies that
$d(u(z),u'(z)) \geq \theta^{1-\kappa''}$ for $|\Im z|<\epsilon''$.

We can then conclude following the argument of the proof of Theorem \ref
{comp}.  Consider $B'_r \in C^\omega_{\epsilon''}(\R/\Z,\SL(2,\R))$
such that $B'_r \cdot u=i$ and $B'_r \cdot u'=-i$.  This matrix will satisfy
the bound $\|B_r'\|_{\epsilon''}^2=O(\theta^{-1+\kappa''})$.
Since $u$ and $u'$ are
$\theta$-almost invariant under $A$, $A'(z)=B_r'(z+\alpha) A(z) B_r(z)^{-1}$
takes $i$ and $-i$ somewhere $\theta^{\kappa''}$ close to themselves.  So
$A'(z)=A''(z)R_{\psi(z)}$ where $A''$ and $\psi$ (which takes values in
$\C/\Z$) are real-symmetric and
$\|A''-\id\|=O(\theta^{\kappa''})$.  Write $\psi(x)=kx+\phi(x)$ with $k \in
\Z$ and $\phi$ taking values in $\C$.  Solve the cohomological equation
$\phi(x)=w(x+\alpha)-w(x)+\lambda$ with $w$ real-symmetric and bounded.
Then $R_{w(z+\alpha)} A''(z)
R_{\psi(z)} R_{-w(z)}=R_{w(z+\alpha)} A''(z) R_{-w(z+\alpha)}
R_{k z+\lambda}$.  Since the Lyapunov exponent vanishes through
the band, $k=0$.
The result then follows by taking $B_r(z)=R_{w(z)} B'_r(z)$.

Assume now that $\|U\|^2_0<2$.  Then $\|S\|^2_0 \geq 2$ (since $B$ has
determinant $1$), so we can do the whole argument with $s$ and $s'$ instead
of $u$ and $u'$ to conclude.
\end{proof}


\end{document}